\theoremstyle{plain}
\newtheorem{theorem}[equation]{\bf Theorem}
\newtheorem{lemma}[equation]{\bf Lemma}
\newtheorem{proposition}[equation]{\bf Proposition}
\newtheorem{corollary}[equation]{\bf Corollary}
\theoremstyle{definition}
\newtheorem{definition}[equation]{\bf Definition}
\newtheorem{remark}[equation]{\bf Remark}
\newtheorem{example}[equation]{\bf Example}
\numberwithin{equation}{section}
\begin{document}

\title[A topic on homogeneous vector bundles over elliptic orbits]
{A topic on homogeneous vector bundles over elliptic orbits: A condition for the vector spaces of their cross-sections to be finite dimensional}

\author[N.~Boumuki]{Nobutaka Boumuki}
\thanks{This work was supported by JSPS KAKENHI Grant Number JP 17K05229.}
\date{}

\subjclass[2010]{Primary 32M10; Secondary 17B22, 22E45.}
\keywords{elliptic (adjoint) orbit, semisimple Lie group, homogeneous vector bundle, root system, analytic continuation, Bruhat decomposition, continuous representation, K-finite vector.}
\address{
Division of Mathematical Sciences, Faculty of Science and Technology\endgraf 
Oita University, 700 Dannoharu, Oita-shi, Oita 870-1192, JAPAN}
\email{boumuki@oita-u.ac.jp}
\maketitle

\begin{abstract}
   In this paper we consider the complex vector spaces of holomorphic cross-sections of homogeneous holomorphic vector bundles over elliptic adjoint orbits, and provide a sufficient condition for the vector spaces to be finite dimensional in view of root systems. 
\end{abstract}

\section{Introduction}\label{sec-1}
   For a connected real semisimple Lie group $G$, the adjoint orbit $\mathrm{Ad}G(T)=G/C_G(T)$ of $G$ through an elliptic element $T\in\frak{g}$ is called an {\it elliptic} ({\it adjoint}) {\it orbit}. 
   Here an element $T\in\frak{g}$ is said to be {\it elliptic}, if $\mathrm{ad}T$ is a semisimple linear transformation of $\frak{g}$ and all the eigenvalues of $\mathrm{ad}T$ are purely imaginary. 
   It is known that elliptic orbits can be geometrically characterized as follows (cf.\ Dorfmeister-Guan \cite{DoGu1,DoGu2}): 
\begin{quote}
   Any elliptic orbit $G/C_G(T)$ is a homogeneous pseudo-K\"{a}hler manifold of $G$. 
   Conversely, a homogeneous pseudo-K\"{a}hler manifold $M$ of $G$ is an elliptic orbit whenever $G$ acts on $M$ almost effectively.
\end{quote}   
   Accordingly there is no essential difference between elliptic orbits and homogeneous pseudo-K\"{a}hler manifolds of real semisimple Lie groups.
   Let us give examples of elliptic orbits. 
   A complex projective space $CP^n$ is one of the Hermitian symmetric spaces of compact type, any Hermitian symmetric space $G_u/K$ of compact type is one of the complex flag manifolds (which are also called generalized flag manifolds or K\"{a}hler C-spaces), and all complex flag manifolds $G_\mathbb{C}/Q$ are elliptic orbits.
   These are examples of elliptic orbits which are compact.
   As a non-compact example, one knows that all symmetric bounded domains $D$ in $\mathbb{C}^n$ are elliptic orbits.
   In this paper, we deal with such spaces. 
\begin{center}
\unitlength=1mm
\begin{picture}(102,26)
\put(1,1){\line(1,0){102}}
\put(1,1){\line(0,1){24}}
\put(103,1){\line(0,1){24}}
\put(60,24){{\bf Elliptic orbits}}
\put(1,25){\line(1,0){58}}
\put(103,25){\line(-1,0){18}}
\put(4,4){\line(1,0){45}}
\put(5,14){Hermitian symmetric spaces}
\put(5,9){of compact type}
\put(4,4){\line(0,1){11}}
\put(49,4){\line(0,1){11}}
\put(33,6){$\bullet$ $CP^n$}
\put(4,15){\line(1,0){0.6}}
\put(49,15){\line(-1,0){0.5}}
\put(3,2){\line(1,0){48}}
\put(3,2){\line(0,1){19}}
\put(9,20){Complex flag manifolds}
\put(3,21){\line(1,0){5.5}}
\put(51,2){\line(0,1){19}}
\put(51,21){\line(-1,0){5.5}}
\put(55,14){Symmetric bounded domains}
\put(55,9){in $\mathbb{C}^n$}
\put(54,4){\line(1,0){47}}
\put(54,4){\line(0,1){11}}
\put(101,4){\line(0,1){11}}
\put(54,15){\line(1,0){0.4}}
\put(101,15){\line(-1,0){1}}
\end{picture}
\end{center}
   Now, let us explain our research background.
   Let $G_\mathbb{C}$ be a connected complex semisimple Lie group, let $G$ be a connected closed subgroup of $G_\mathbb{C}$ such that $\frak{g}$ is a real form of $\frak{g}_\mathbb{C}$, and let $T$ be a non-zero elliptic element of $\frak{g}$.
   Setting 
\[
\begin{array}{l}
   L:=C_G(T),\qquad  \mbox{$\frak{g}^\lambda:=\{X\in\frak{g}_\mathbb{C} \,|\, \mathrm{ad}T(X)=i\lambda X\}$ for $\lambda\in\mathbb{R}$},\\
   Q^-:=\{x\in G_\mathbb{C} \,|\, \mathrm{Ad}x\bigl(\bigoplus_{\mu\leq 0}\frak{g}^\mu\bigr)\subset\bigoplus_{\mu\leq 0}\frak{g}^\mu\},
\end{array}
\]
one has an elliptic orbit $G/L$, a complex flag manifold $G_\mathbb{C}/Q^-$ and $L=G\cap Q^-$; besides, it turns out that $\iota:G/L\to G_\mathbb{C}/Q^-$, $gL\mapsto gQ^-$, is a $G$-equivariant real analytic embedding whose image is a simply connected domain in $G_\mathbb{C}/Q^-$, and that $GQ^-$ is a domain in $G_\mathbb{C}$.
   Henceforth, we assume $G/L$ to be a domain in $G_\mathbb{C}/Q^-$ and it to be a homogeneous complex manifold of $G$ via this $\iota$.
\begin{center}
\unitlength=1mm
\begin{picture}(72,20)
\put(10,1){$G/L$}
\put(37,4){$\iota$}
\put(20,3){\vector(1,0){40}}
\put(2,17){$\iota^\sharp(G_\mathbb{C}\times_\rho{\sf V})$}
\put(12,14){\vector(0,-1){8}}
\put(62,1){$G_\mathbb{C}/Q^-$}
\put(59,17){$G_\mathbb{C}\times_\rho{\sf V}$}
\put(67,14){\vector(0,-1){8}}
\end{picture}
\end{center}    
   In addition, let ${\sf V}$ be a finite dimensional complex vector space and let $\rho:Q^-\to GL({\sf V})$, $q\mapsto\rho(q)$, be a holomorphic homomorphism. 
   Denote by $G_\mathbb{C}\times_\rho{\sf V}$ the fiber bundle over the complex flag manifold $G_\mathbb{C}/Q^-$, with standard fiber ${\sf V}$ and structure group $Q^-$, which is associated to the principal fiber bundle $\pi_\mathbb{C}:G_\mathbb{C}\to G_\mathbb{C}/Q^-$, $x\mapsto xQ^-$, and denote by $\iota^\sharp(G_\mathbb{C}\times_\rho{\sf V})$ the restriction of the bundle $G_\mathbb{C}\times_\rho{\sf V}$ to the domain $G/L\subset G_\mathbb{C}/Q^-$.
   Then one may assume that 
\[
\begin{split}
&  \mathcal{V}_{G_\mathbb{C}/Q^-}\!\!
   :=\!\!\left\{\begin{array}{@{}l@{\,\,}|l@{}}
   h:G_\mathbb{C}\to{\sf V} 
   & \begin{array}{@{\!}l@{\!}}
      \mbox{(1) $h$ is holomorphic},\\
      \mbox{(2) $h(xq)=\rho(q)^{-1}(h(x))$ for all $(x,q)\in G_\mathbb{C}\!\times\! Q^-$}
     \end{array}\end{array}\right\}\! \mbox{ and}\\
&  \mathcal{V}_{G/L}\!\!
   :=\!\!\left\{\begin{array}{@{}l@{\,\,}|l@{}}
   \psi:GQ^-\to{\sf V} 
   & \begin{array}{@{\!}l@{\!}}
      \mbox{(1) $\psi$ is holomorphic},\\
      \mbox{(2) $\psi(yq)=\rho(q)^{-1}(\psi(y))$ for all $(y,q)\in GQ^-\!\times\! Q^-$}
     \end{array}\end{array}\right\}
\end{split} 
\]
are the complex vector spaces of holomorphic cross-sections of the bundles $G_\mathbb{C}\times_\rho{\sf V}$ and $\iota^\sharp(G_\mathbb{C}\times_\rho{\sf V})$, respectively.
   Here, we remark that the vector space $\mathcal{V}_{G_\mathbb{C}/Q^-}$ is always finite dimensional,
\[
   \dim_\mathbb{C}\mathcal{V}_{G_\mathbb{C}/Q^-}<\infty
\]
because $G_\mathbb{C}/Q^-$ is a connected compact complex manifold; but, in contrast, $\mathcal{V}_{G/L}$ is not necessarily finite dimensional---for example, $\dim_\mathbb{C}\mathcal{V}_{G/L}=\infty$ in the case where $G/L$ is a symmetric bounded domain in $\mathbb{C}^n$ and $\mathcal{V}_{G/L}$ is the vector space $\mathcal{O}(T^{1,0}(G/L))$ of holomorphic vector fields on it. 
   This poses us the following problem:
\begin{center}
   ``What is a condition for $\dim_\mathbb{C}\mathcal{V}_{G/L}<\infty$ ?'' 
\end{center}
   In this paper we partially solve this problem.
\par

   The main purpose of this paper is to provide a sufficient condition so that all the holomorphic mappings $\psi\in\mathcal{V}_{G/L}$ can be continued analytically from $GQ^-$ to $G_\mathbb{C}$. 
   In view of a root system $\triangle$ of $\frak{g}_\mathbb{C}$, we assert the following statement (see Subsection \ref{subsec-3.1}, Theorem \ref{thm-3.1}): 
\begin{quote}
   {\it Suppose that {\rm (S)} there exists a fundamental root system $\Pi_\triangle$ of $\triangle$ satisfying  
   \begin{enumerate}
   \item[]{\rm (s1)} $\alpha(-iT)\geq 0$ for all $\alpha\in\Pi_\triangle$, and
   \item[]{\rm (s2)} $\frak{g}_\beta\subset\frak{k}_\mathbb{C}$ for every $\beta\in\Pi_\triangle$ with $\beta(T)\neq0$.
   \end{enumerate}
   Then, all the holomorphic mappings $\psi\in\mathcal{V}_{G/L}$ extend uniquely to holomorphic ones $\hat{\psi}\in\mathcal{V}_{G_\mathbb{C}/Q^-}$ and $\dim_\mathbb{C}\mathcal{V}_{G/L}=\dim_\mathbb{C}\mathcal{V}_{G_\mathbb{C}/Q^-}\hspace{-0.65mm}<\hspace{-0.65mm}\infty$}.
\end{quote} 
   Pay attention to that in the case where the above supposition (S) holds, the vector space $\mathcal{V}_{G/L}$ is finite dimensional for any complex vector space ${\sf V}$ of $\dim_\mathbb{C}{\sf V}<\infty$ and any holomorphic homomorphism $\rho:Q^-\to GL({\sf V})$. 
   Hence, in particular, one can deduce that in this case, the group $\mathrm{Hol}(G/L)$ of holomorphic automorphisms of $G/L$ is a (finite dimensional) Lie group.\par
   
   This paper consists of four sections. 
   In Section \ref{sec-2} we mainly review known facts about elliptic orbits, generalized Bruhat decompositions and homogeneous holomorphic vector bundles.
   In Section \ref{sec-3} we state the main result in this paper (Theorem \ref{thm-3.1}) and demonstrate it by taking a continuous representation $\varrho$ of $G$ on $\mathcal{V}_{G/L}$, a generalized Bruhat decomposition of $G_\mathbb{C}$ and the second Riemann removable singularity theorem into account. 
   Finally in Section \ref{sec-4}, we give some examples which satisfy the supposition (S) in Theorem \ref{thm-3.1}, and give an example which does not so.
   We will see that the (S) cannot hold for any symmetric bounded domain $D$ in $\mathbb{C}^n$, cf.\ Example \ref{ex-4.2}.

\section{Preliminaries}\label{sec-2}
   In this section we first fix the notation utilized in this paper, and afterwards review known facts about elliptic orbits, generalized Bruhat decompositions and homogeneous holomorphic vector bundles. 
   We will give two Lemmas \ref{lem-2.6} and \ref{lem-2.14}, Corollary \ref{cor-2.20} and Proposition \ref{prop-2.27} especially needed in Section \ref{sec-3}.
\subsection{Notation}\label{subsec-2.1}
   Throughout this paper, for a Lie group $G$, we denote its Lie algebra by the corresponding Fraktur small letter $\frak{g}$, and utilize the following notation: 
\begin{enumerate} 
\item[](n1)
   $i:=\sqrt{-1}$,
\item[](n2)
   $\mathrm{Ad}$, $\mathrm{ad}$ : the adjoint representation of $G$, $\frak{g}$, 
\item[](n3)
   $C_G(T):=\{g\in G \,|\,\mathrm{Ad}g(T)=T\}$ for an element $T\in\frak{g}$, 
\item[](n4)
   $N_G(\frak{m}):=\{g\in G \,|\,\mathrm{Ad}g(\frak{m})\subset\frak{m}\}$ for a vector subspace $\frak{m}\subset\frak{g}$, 
\item[](n5)
   $\frak{m}\oplus\frak{n}$ : the direct sum of vector spaces $\frak{m}$ and $\frak{n}$,
\item[](n6) 
   $GL(V)$ : the general linear group on a complex vector space $V$.
\end{enumerate}
   Besides, we sometimes denote by $f|_A$ the restriction of a mapping $f$ to a set $A$.

\subsection{Elliptic orbits}\label{subsec-2.2}
   Kobayashi \cite{Ko} has introduced the notion of elliptic orbit, which is as follows: 
\begin{definition}[cf.\ Kobayashi {\cite[p.5]{Ko}}] 
   Let $\frak{g}$ be a real semisimple Lie algebra and $G$ a connected Lie group with Lie algebra $\frak{g}$. 
   An element $T\in\frak{g}$ is said to be {\it elliptic}, if $\mathrm{ad}T$ is a semisimple linear transformation of $\frak{g}$ and all the eigenvalues of $\mathrm{ad}T$ are purely imaginary. 
   The adjoint orbit $\mathrm{Ad}G(T)=G/C_G(T)$ of $G$ through an elliptic element $T\in\frak{g}$ is called an {\it elliptic} ({\it adjoint}) {\it orbit}. 
\end{definition}

   Now, let $G_\mathbb{C}$ be a connected complex semisimple Lie group, let $G$ be a connected closed subgroup of $G_\mathbb{C}$ such that $\frak{g}$ is a real form of $\frak{g}_\mathbb{C}$, and let $T$ be a non-zero elliptic element of $\frak{g}$. 
   Then we set 
\begin{equation}\label{eq-2.2}
\left\{
\begin{array}{@{}l}
   \begin{array}{@{}ll}
   L:=C_G(T), & L_\mathbb{C}:=C_{G_\mathbb{C}}(T),
   \end{array}\\
   \mbox{$\frak{g}^\lambda:=\{X\in\frak{g}_\mathbb{C} \,|\, \mathrm{ad}T(X)=i\lambda X\}$ for $\lambda\in\mathbb{R}$},\\
   \begin{array}{@{}lll}
   \frak{u}^\pm:=\bigoplus_{\lambda>0}\frak{g}^{\pm\lambda}, 
   & U^\pm:=\exp\frak{u}^\pm, 
   & Q^\pm:=N_{G_\mathbb{C}}(\frak{l}_\mathbb{C}\oplus\frak{u}^\pm),
   \end{array}
\end{array}\right.
\end{equation} 
where $\frak{g}^\lambda=\{0\}$ in the case where $\lambda$ is different from the eigenvalues of $\mathrm{ad}T$, we denote by $\exp:\frak{g}_\mathbb{C}\to  G_\mathbb{C}$ the exponential mapping, and $\frak{u}^\pm_T$ will stand for the above $\frak{u}^\pm$ for once in Lemma \ref{lem-2.6}. 
   Since $T\in\frak{g}$ is elliptic, there exists a Cartan decomposition $\frak{g}=\frak{k}\oplus\frak{p}$ of $\frak{g}$ such that 
\begin{equation}\label{eq-2.3}
   T\in\frak{k},
\end{equation} 
where $\frak{k}$ is a maximal compact subalgebra of $\frak{g}$. 
   Noting that the center $Z(G)$ of $G$ is finite due to $Z(G)\subset Z(G_\mathbb{C})$ and that $\frak{g}_u:=\frak{k}\oplus i\frak{p}$ is a compact real form of $\frak{g}_\mathbb{C}$, we denote by $K$ and $G_u$ the maximal compact subgroups of $G$ and $G_\mathbb{C}$ corresponding to the subalgebras $\frak{k}\subset\frak{g}$ and $\frak{g}_u\subset\frak{g}_\mathbb{C}$, respectively. 
   In addition, we denote by the (anti-holomorphic) Cartan involution $\bar{\theta}$ of $G_\mathbb{C}$ such that 
\begin{equation}\label{eq-2.4}
   G_u=\{g_u\in G_\mathbb{C} \,|\, \bar{\theta}(g_u)=g_u\}.
\end{equation} 
   Let us give easy lemmas and review a known fact. 
\begin{lemma}\label{lem-2.5}
   In the setting \eqref{eq-2.2}$;$ 
\begin{enumerate}
\item[{\rm (1)}]
   $L_\mathbb{C}$ is a connected closed complex subgroup of $G_\mathbb{C}$ with $\frak{l}_\mathbb{C}=\frak{g}^0$. 
\item[{\rm (2)}] 
   $\frak{g}_\mathbb{C}=\bigoplus_{\lambda\in\mathbb{R}}\frak{g}^\lambda=\frak{u}^+\oplus\frak{l}_\mathbb{C}\oplus\frak{u}^-$.  
\item[{\rm (3)}]
   $\frak{l}_\mathbb{C}\oplus\frak{u}^+=\bigoplus_{\mu\geq 0}\frak{g}^\mu$ and $\frak{l}_\mathbb{C}\oplus\frak{u}^-=\bigoplus_{\mu\leq 0}\frak{g}^\mu$. 
\item[{\rm (4)}] 
   $\mathrm{Ad}L_\mathbb{C}(\frak{g}^\lambda)\subset\frak{g}^\lambda$ for all $\lambda\in\mathbb{R}$. 
\item[{\rm (5)}]
   $[\frak{g}^\lambda,\frak{g}^\mu]\subset\frak{g}^{\lambda+\mu}$ for all $\lambda,\mu\in\mathbb{R}$. 
\item[{\rm (6)}]
   $\frak{u}^s$ is a complex nilpotent subalgebra of $\frak{g}_\mathbb{C}$ such that $\mathrm{Ad}L_\mathbb{C}(\frak{u}^s)\subset\frak{u}^s$, for each $s=\pm$.
\end{enumerate} 
   In the setting \eqref{eq-2.2}, \eqref{eq-2.3} and \eqref{eq-2.4}$;$   
\begin{enumerate}
\item[{\rm (7)}]
   $\bar{\theta}_*(\frak{g}^\lambda)=\frak{g}^{-\lambda}$ for all $\lambda\in\mathbb{R}$.  
\item[{\rm (8)}]
   $\bar{\theta}(L_\mathbb{C})=L_\mathbb{C}$, $\bar{\theta}_*(\frak{u}^+)=\frak{u}^-$, $\bar{\theta}_*(\frak{u}^-)=\frak{u}^+$. 
\end{enumerate} 
\end{lemma}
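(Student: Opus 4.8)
The plan is to verify the eight assertions by unwinding the definitions, the one substantial input being that ellipticity of $T$ forces $\mathrm{ad}T$ to be diagonalizable over $\mathbb{C}$ with purely imaginary eigenvalues. First I would establish (2): writing the eigenvalues of $\mathrm{ad}T$ as $i\lambda$ with $\lambda\in\mathbb{R}$, semisimplicity of $\mathrm{ad}T$ gives the eigenspace decomposition $\frak{g}_\mathbb{C}=\bigoplus_{\lambda}\frak{g}^\lambda$, and collecting the summands with $\lambda>0$, $\lambda=0$ and $\lambda<0$ yields $\frak{g}_\mathbb{C}=\frak{u}^+\oplus\frak{l}_\mathbb{C}\oplus\frak{u}^-$ once one knows $\frak{l}_\mathbb{C}=\frak{g}^0$. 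Assertion (3) is then immediate from the definition of $\frak{u}^\pm$, and (5) follows at once from the Jacobi identity: for $X\in\frak{g}^\lambda$ and $Y\in\frak{g}^\mu$ one computes $\mathrm{ad}T([X,Y])=[\mathrm{ad}T(X),Y]+[X,\mathrm{ad}T(Y)]=i(\lambda+\mu)[X,Y]$, so $[X,Y]\in\frak{g}^{\lambda+\mu}$.

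For (1), the identity $\frak{l}_\mathbb{C}=\frak{g}^0$ is just the computation of the Lie algebra of the centralizer $C_{G_\mathbb{C}}(T)$ as the kernel of $\mathrm{ad}T$; closedness is automatic for a centralizer, and $L_\mathbb{C}$ is a complex subgroup because $\mathrm{ad}T$ is $\mathbb{C}$-linear on $\frak{g}_\mathbb{C}$, so $\frak{g}^0$ is a complex subalgebra. The only point here that must be cited rather than computed is connectedness of $L_\mathbb{C}$, which is the standard fact that the centralizer of a semisimple element --- equivalently of the torus it generates --- in a connected complex semisimple Lie group is connected. For (4), one notes that $g\in L_\mathbb{C}$ satisfies $\mathrm{Ad}g\circ\mathrm{ad}T=\mathrm{ad}(\mathrm{Ad}g(T))\circ\mathrm{Ad}g=\mathrm{ad}T\circ\mathrm{Ad}g$, so $\mathrm{Ad}g$ commutes with $\mathrm{ad}T$ and hence preserves each eigenspace $\frak{g}^\lambda$.

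Assertion (6) then follows by combining the previous items: $\frak{u}^s$ is a subalgebra by (5) since the set of positive (respectively negative) eigenvalues is stable under addition inside the finite set of eigenvalues of $\mathrm{ad}T$; it is nilpotent because iterated brackets strictly raise (respectively lower) the grading while only finitely many $\frak{g}^\lambda$ are nonzero, so the descending central series terminates; and $\mathrm{Ad}L_\mathbb{C}$-invariance is exactly (4). Finally, for (7) and (8) I would use that $\bar{\theta}_*$ is a conjugate-linear automorphism of the real Lie algebra $\frak{g}_\mathbb{C}$ which fixes $T$, since $T\in\frak{k}\subset\frak{g}_u$ by \eqref{eq-2.3} and $\frak{g}_u$ is the fixed-point set of $\bar{\theta}_*$ by \eqref{eq-2.4}: for $X\in\frak{g}^\lambda$ one gets $\mathrm{ad}T(\bar{\theta}_*X)=\bar{\theta}_*([T,X])=\bar{\theta}_*(i\lambda X)=-i\lambda\,\bar{\theta}_*X$, whence $\bar{\theta}_*(\frak{g}^\lambda)=\frak{g}^{-\lambda}$; summing over $\lambda>0$ gives $\bar{\theta}_*(\frak{u}^+)=\frak{u}^-$ and $\bar{\theta}_*(\frak{u}^-)=\frak{u}^+$, while $\bar{\theta}(L_\mathbb{C})=\bar{\theta}(C_{G_\mathbb{C}}(T))=C_{G_\mathbb{C}}(\bar{\theta}_*(T))=C_{G_\mathbb{C}}(T)=L_\mathbb{C}$ (using $\bar{\theta}_*(T)=T$ and that $\bar{\theta}$ intertwines $\mathrm{Ad}$ with $\bar{\theta}_*\mathrm{Ad}\,\bar{\theta}_*^{-1}$). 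The only mild obstacle in the whole argument is locating the connectedness statement for $L_\mathbb{C}$; everything else is bookkeeping with the eigenspace decomposition.
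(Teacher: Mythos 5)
Your proof is correct and matches the paper's (very terse) treatment: the paper likewise isolates connectedness of $L_\mathbb{C}=C_{G_\mathbb{C}}(T)$ as the only non-trivial point, deducing it from $G_\mathbb{C}$ being connected semisimple and $T$ elliptic (i.e.\ the centralizer of the torus generated by $T$), and dismisses the remaining items as routine eigenspace bookkeeping, which is exactly what you carry out.
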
   
\begin{proof}
   Since $G_\mathbb{C}$ is connected semisimple and $T\in\frak{g}$ is an elliptic element of $\frak{g}_\mathbb{C}$ also, one shows that $L_\mathbb{C}=C_{G_\mathbb{C}}(T)$ is connected.
   The rest of proof is trivial.
\end{proof}

\begin{lemma}[cf.\ \cite{BoNo}]\label{lem-2.6}
   Let $G_\mathbb{C}$ be a connected complex semisimple Lie group, let $G$ be a connected closed subgroup of $G_\mathbb{C}$ such that $\frak{g}$ is a real form of $\frak{g}_\mathbb{C}$, and let $T$ be a non-zero elliptic element of $\frak{g}$. 
   Fix a Cartan decomposition $\frak{g}=\frak{k}\oplus\frak{p}$ with $T\in\frak{k}$, and take a maximal torus $i\frak{h}_\mathbb{R}$ of $\frak{g}_u=\frak{k}\oplus i\frak{p}$ containing $T$. 
   Then, there exists an elliptic element $T'\in\frak{g}$ such that 
\begin{enumerate}
\item[{\rm (i)}]   
   all the eigenvalues of $\mathrm{ad}iT'$ are integer,
\item[{\rm (ii)}] 
   $C_G(T)=C_G(T')$, 
\item[{\rm (iii)}]   
   $\frak{u}^+_T=\frak{u}^+_{T'}$, $\frak{u}^-_T=\frak{u}^-_{T'}$ and
\item[{\rm (iv)}] 
   $T'\in i\frak{h}_\mathbb{R}$.
\end{enumerate}
   Here, we refer to \eqref{eq-2.2} for $\frak{u}^\pm_T$, $\frak{u}^\pm_{T'}$.
\end{lemma}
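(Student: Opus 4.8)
The plan is to pass to the root system of $(\frak{g}_\mathbb{C},\frak{h}_\mathbb{C})$, where $\frak{h}_\mathbb{C}:=\mathbb{C}\,(i\frak{h}_\mathbb{R})$ is the complexified torus, and to build $T'$ from a suitably rescaled point of the closed Weyl-chamber face containing $-iT$. Put $\triangle:=\triangle(\frak{g}_\mathbb{C},\frak{h}_\mathbb{C})$ and $H_0:=-iT\in\frak{h}_\mathbb{R}$; every root is real-valued on $\frak{h}_\mathbb{R}$ and $\mathrm{ad}T$ acts on the root space $\frak{g}_\alpha$ by the scalar $i\alpha(H_0)$, so that $\frak{g}^\lambda=\bigoplus_{\alpha(H_0)=\lambda}\frak{g}_\alpha$ for $\lambda\neq0$, $\frak{l}_\mathbb{C}=\frak{g}^0=\frak{h}_\mathbb{C}\oplus\bigoplus_{\alpha(H_0)=0}\frak{g}_\alpha$, and $\frak{u}^\pm_T=\bigoplus_{\pm\alpha(H_0)>0}\frak{g}_\alpha$. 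It therefore suffices to produce $H'\in\frak{h}_\mathbb{R}$ with $(\mathrm{a})$ $\alpha(H')\in\mathbb{Z}$ for all $\alpha\in\triangle$, $(\mathrm{b})$ $\mathrm{sign}\,\alpha(H')=\mathrm{sign}\,\alpha(H_0)$ for all $\alpha\in\triangle$, and $(\mathrm{c})$ $iH'\in\frak{g}$; then $T':=iH'$ does the job. Indeed $(\mathrm{c})$ gives $T'\in\frak{g}$, and $T'$ is elliptic because it lies in $\frak{k}=\frak{g}\cap\frak{g}_u$; $(\mathrm{b})$ gives $\frak{g}^0_{T'}=\frak{g}^0_T$ and $\frak{u}^\pm_{T'}=\frak{u}^\pm_T$, hence {\rm (iii)}, and---since $C_G(\,\cdot\,)=G\cap C_{G_\mathbb{C}}(\,\cdot\,)$ while $C_{G_\mathbb{C}}(T)$ and $C_{G_\mathbb{C}}(T')$ are the connected closed subgroups with Lie algebra $\frak{g}^0$ by Lemma \ref{lem-2.5}(1)---also {\rm (ii)}; $(\mathrm{a})$ gives {\rm (i)}, because $\mathrm{ad}(iT')=\mathrm{ad}(-H')$ has eigenvalues $0$ and $\{-\alpha(H')\mid\alpha\in\triangle\}\subset\mathbb{Z}$; and {\rm (iv)} is immediate from $H'\in\frak{h}_\mathbb{R}$.

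To arrange $(\mathrm{b})$, I would fix a fundamental system $\Pi$ of $\triangle$ with $\alpha(H_0)\geq0$ for all $\alpha\in\Pi$ (possible since the closures of the Weyl chambers cover $\frak{h}_\mathbb{R}$), set $\Pi_0:=\{\alpha\in\Pi\mid\alpha(H_0)=0\}$, $\Pi_1:=\Pi\setminus\Pi_0$, and consider the relatively open face $F^{\circ}:=\{H\in\frak{h}_\mathbb{R}\mid\alpha(H)=0\ (\alpha\in\Pi_0),\ \alpha(H)>0\ (\alpha\in\Pi_1)\}$, which contains $H_0$. Expanding a positive root $\beta=\sum_{\alpha\in\Pi}n_\alpha\alpha$ with $n_\alpha\geq0$, one sees that for $H\in F^{\circ}$ one has $\beta(H)=0$ exactly when $\beta$ lies in the subsystem generated by $\Pi_0$, and $\beta(H)>0$ otherwise; applying this to $H_0$ as well shows that \emph{every} $H'\in F^{\circ}$ already satisfies $(\mathrm{b})$. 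So the remaining task is to find $H'\in F^{\circ}$ meeting $(\mathrm{a})$ and $(\mathrm{c})$.

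For $(\mathrm{c})$ I would arrange, as we may, that $i\frak{h}_\mathbb{R}$ is $\theta$-stable, $\theta$ being the Cartan involution with $\frak{g}=\frak{k}\oplus\frak{p}$, extended $\mathbb{C}$-linearly to $\frak{g}_\mathbb{C}$: indeed $C_{\frak{g}_u}(T)$ is a $\theta$-stable compact reductive subalgebra with $T$ in its centre, hence contains a $\theta$-stable maximal abelian subalgebra, which is then a $\theta$-stable maximal torus of $\frak{g}_u$ containing $T$. Since $\theta$ stabilizes $\frak{h}_\mathbb{R}$ and permutes $\triangle$, the fixed space $\frak{h}_\mathbb{R}^{+\theta}$ is rational for the $\mathbb{Q}$-structure on $\frak{h}_\mathbb{R}$ spanned by the coroots; moreover $iH'\in\frak{g}\iff iH'\in\frak{k}\cap i\frak{h}_\mathbb{R}\iff H'\in\frak{h}_\mathbb{R}^{+\theta}$, and $H_0\in\frak{h}_\mathbb{R}^{+\theta}$ because $iH_0=T\in\frak{k}$. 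Now $F^{\circ}\cap\frak{h}_\mathbb{R}^{+\theta}$ is a non-empty (it contains $H_0$), relatively open subset of the rational subspace $\frak{h}_\mathbb{R}^{+\theta}$ cut out by $\mathbb{Q}$-rational linear conditions, so it contains a rational point $H''$; clearing the denominators of the finitely many numbers $\alpha(H'')$, $\alpha\in\Pi$, by a positive integer $N$ and setting $H':=NH''$ yields $(\mathrm{a})$, keeps $(\mathrm{b})$ and $(\mathrm{c})$ (as $F^{\circ}$ and $\frak{h}_\mathbb{R}^{+\theta}$ are cones), and the reduction of the first paragraph then finishes the proof.

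I expect condition $(\mathrm{c})$---keeping $T'$ inside the real form $\frak{g}$---to be the one requiring care: a maximal torus $i\frak{h}_\mathbb{R}$ of $\frak{g}_u$ need not sit inside $\frak{k}$, so a point of the face written down naively need not be real in $\frak{g}$. This is precisely why one takes $i\frak{h}_\mathbb{R}$ to be $\theta$-stable, making $\frak{g}\cap i\frak{h}_\mathbb{R}$ a $\mathbb{Q}$-rational subspace of the torus, and then uses that the face in which $-iT$ lies is itself $\mathbb{Q}$-rational, which lets one move $-iT$ to a rational---and, after scaling, integral---interior point of that face still lying in $\frak{g}$. The rest is routine bookkeeping with the root-space decomposition and Lemma \ref{lem-2.5}.
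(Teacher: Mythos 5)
The paper does not actually prove this lemma --- it just defers to the proof of Theorem~2.3 in \cite{BoNo} --- so yours is the only self-contained argument on the table, and most of it is sound: the reduction to finding $H'\in\frak{h}_\mathbb{R}$ with integral root values, the same sign pattern as $H_0=-iT$, and $iH'\in\frak{g}$; the observation that every point of the relatively open face $F^{\circ}$ realizes the same sign pattern as $H_0$; and the density-of-rational-points argument on the $\theta$-fixed subspace, followed by clearing denominators, all check out.

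There is, however, one genuine gap, and it sits exactly at the point you flagged as delicate. The lemma fixes an \emph{arbitrary} maximal torus $i\frak{h}_\mathbb{R}$ of $\frak{g}_u$ containing $T$ and demands $T'\in i\frak{h}_\mathbb{R}$ in conclusion (iv); you instead \emph{replace} that torus by a $\theta$-stable one (``as we may''). This is not without loss of generality: when $T$ is singular and $\frak{c}_{\frak{g}_u}(T)$ has a noncompactly embedded semisimple part (e.g.\ the case $\frak{l}=\frak{sl}(2,\mathbb{R})\oplus\frak{t}^1$ of Example \ref{ex-4.3}), there are maximal tori of $\frak{g}_u$ containing $T$ that are not $\theta$-stable, and the paper really uses (iv) for the \emph{given} torus --- Remark \ref{rem-3.2} compares $\alpha(T)$ with $\alpha(T')$ for roots $\alpha$ of the prescribed $\frak{h}_\mathbb{C}$, which only makes sense if $T'$ lies in that $\frak{h}_\mathbb{C}$. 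Fortunately your construction already contains the repair: condition (b) forces $\alpha(H')=0$ for every root $\alpha$ with $\alpha(H_0)=0$, so $T'=iH'$ centralizes all of $\frak{l}_\mathbb{C}=\frak{c}_{\frak{g}_\mathbb{C}}(T)$, i.e.\ $T'\in Z(\frak{l}_\mathbb{C})$. The complexification of \emph{any} maximal torus of $\frak{g}_u$ containing $T$ is a Cartan subalgebra of $\frak{g}_\mathbb{C}$ lying inside $\frak{l}_\mathbb{C}$, hence contains $Z(\frak{l}_\mathbb{C})$ and in particular $T'$; since $T'\in\frak{g}_u$, it then lies in the real torus itself. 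Adding that observation makes (iv) hold for the originally given $i\frak{h}_\mathbb{R}$ and closes the argument.
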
   
\begin{proof}
   One can conclude this lemma by the proof of Theorem 2.3 in \cite[p.66]{BoNo}.
\end{proof}

   From Lemma \ref{lem-2.5} we deduce  
\begin{proposition}\label{prop-2.7}
   In the setting \eqref{eq-2.2}$;$
\begin{enumerate}
\item[{\rm (1)}]
   $U^s$ is a simply connected, closed complex nilpotent subgroup of $G_\mathbb{C}$ whose Lie algebra coincides with $\frak{u}^s$, and $\exp:\frak{u}^s\to U^s$ is biholomorphic, for each $s=\pm$.
\item[{\rm (2)}]
   $Q^s$ is a connected, closed complex parabolic subgroup of $G_\mathbb{C}$ such that 
$Q^s=L_\mathbb{C}\ltimes U^s$ 
$($semidirect$)$ and $\frak{q}^s=(\frak{l}_\mathbb{C}\oplus\frak{u}^s)=\bigoplus_{\mu\geq 0}\frak{g}^{s\mu}$, for each $s=\pm$. 
\item[{\rm (3)}]
   $U^+\times Q^-\ni(u,q)\mapsto uq\in G_\mathbb{C}$ is a holomorphic embedding whose image is a dense, domain in $G_\mathbb{C}$.
\item[{\rm (4)}] 
   $L$ is a connected closed subgroup of $G$, and the homogeneous space $G/L$ is simply connected. 
\item[{\rm (5)}]
   $L=G\cap Q^-$.   
\item[{\rm (6)}]
   $\iota:G/L\to G_\mathbb{C}/Q^-$, $gL\mapsto gQ^-$, is a $G$-equivariant real analytic embedding whose image is a simply connected domain in $G_\mathbb{C}/Q^-$.
\item[{\rm (7)}]
   $GQ^-$ is a domain in $G_\mathbb{C}$.
\end{enumerate}
   In the setting \eqref{eq-2.2}, \eqref{eq-2.3} and \eqref{eq-2.4}$;$
\begin{enumerate}
\item[{\rm (8)}]
   $\bar{\theta}(U^+)=U^-$, $\bar{\theta}(U^-)=U^+$, $\bar{\theta}(Q^+)=Q^-$, $\bar{\theta}(Q^-)=Q^+$ and $\bar{\theta}(L)=L$.
\end{enumerate}
\end{proposition}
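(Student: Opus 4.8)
The plan is to deduce the eight assertions from the infinitesimal facts of Lemma~\ref{lem-2.5} together with the classical structure theory of complex semisimple Lie groups, carried out in a convenient order rather than the stated one. Using Lemma~\ref{lem-2.6} I would first replace $T$ by the integral element $T'$ — which changes neither $L$, $L_\mathbb{C}$, $\frak{u}^\pm$ nor $Q^\pm$ — and fix the root space decomposition of $\frak{g}_\mathbb{C}$ relative to a maximal torus $i\frak{h}_\mathbb{R}\ni T'$; then $\frak{q}^s:=\frak{l}_\mathbb{C}\oplus\frak{u}^s=\bigoplus_{\mu\geq0}\frak{g}^{s\mu}$ (Lemma~\ref{lem-2.5}(3)) is a parabolic subalgebra with Levi factor $\frak{l}_\mathbb{C}=\frak{g}^0$ and nilradical the $\mathrm{ad}$-nilpotent subalgebra $\frak{u}^s$ (Lemma~\ref{lem-2.5}(5)--(6)). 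Assertion (1) is then the standard fact that the exponential of an $\mathrm{ad}$-nilpotent subalgebra of a linear semisimple group is a biholomorphism onto a closed, simply connected unipotent subgroup. For (2): $Q^s=N_{G_\mathbb{C}}(\frak{q}^s)$ is the parabolic subgroup attached to $\frak{q}^s$, hence connected and closed with Lie algebra $\frak{q}^s$; the inclusions $L_\mathbb{C},U^s\subset Q^s$ come from Lemma~\ref{lem-2.5}(4)--(6), one has $L_\mathbb{C}\cap U^s=\{e\}$ (their Lie algebras meet in $0$ and $U^s$ is unipotent) with $L_\mathbb{C}$ normalizing $U^s$, and a dimension count with connectedness yields $Q^s=L_\mathbb{C}\ltimes U^s$. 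Assertion (3) is the generalized Bruhat (Gauss) decomposition: the holomorphic multiplication map $U^+\times Q^-\to G_\mathbb{C}$ has bijective differential at $(e,e)$ by Lemma~\ref{lem-2.5}(2), hence everywhere by translation, is injective since $U^+\cap Q^-=\{e\}$, and its image is the preimage under $\pi_\mathbb{C}$ of the open Schubert cell $U^+\cdot eQ^-\subset G_\mathbb{C}/Q^-$, which is open, dense, and — as a continuous image of $U^+\times Q^-$ — connected, i.e.\ a domain.

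For the real-form statements I would argue as follows. In (5), write $g\in G\cap Q^-$ as $g=\ell u$ with $\ell\in L_\mathbb{C}$, $u\in U^-$ (using (2)); the conjugation $\sigma$ of $G_\mathbb{C}$ with respect to the real form $\frak{g}$ fixes $G$, preserves $L_\mathbb{C}$ and interchanges $U^+$ with $U^-$ because $\sigma(\frak{g}^\lambda)=\frak{g}^{-\lambda}$, so $g=\sigma(g)=\sigma(\ell)\sigma(u)$ with $\sigma(\ell)\in L_\mathbb{C}$ and $\sigma(u)\in U^+$; uniqueness of the factorization in the big cell $U^+L_\mathbb{C}U^-$ (from (2)--(3)) then forces $\ell=\sigma(\ell)\in G\cap L_\mathbb{C}=L$ and $u=\sigma(u)\in U^+\cap U^-=\{e\}$, whence $G\cap Q^-\subset L$; the reverse inclusion is $L\subset L_\mathbb{C}\subset Q^-$. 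For (4): $L=C_G(T)$ is stable under the Cartan involution $\theta$ of $G$ (which fixes $T\in\frak{k}$), so the Cartan decomposition gives $L=(L\cap K)\exp(\frak{p}\cap\frak{l})$, where $L\cap K=C_K(T)$ is connected, being the centralizer of a torus of the connected compact group $K$; hence $L$ is connected, and by Mostow's decomposition theorem $G/L$ is homotopy equivalent to the compact flag manifold $K/C_K(T)$, which is simply connected. Granting (5), the map $\iota$ is well defined and injective, and its differential at $eL$ is injective because $\frak{g}\cap\frak{q}^-=\frak{l}$ (same $\sigma$-argument); since $\dim_\mathbb{R}\frak{g}/\frak{l}=2\dim_\mathbb{C}\frak{u}^+=\dim_\mathbb{R}G_\mathbb{C}/Q^-$, the map $\iota$ is an open immersion, hence a real-analytic embedding onto a domain, simply connected by (4); this is (6). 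Then (7) is immediate, since $GQ^-=\pi_\mathbb{C}^{-1}(\iota(G/L))$ is open and, being the image of $G\times Q^-$, connected. Finally (8) follows by applying $\bar{\theta}$ and $\bar{\theta}_*$ to the defining formulas for $U^\pm$, $Q^\pm$, $L$, using Lemma~\ref{lem-2.5}(7)--(8), $\bar{\theta}|_G=\theta$ and $\bar{\theta}_*(T)=T$.

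I expect the substantive difficulties to be global rather than infinitesimal: the density of the big cell in (3), and above all the topology in (4) — the connectedness of $C_G(T)$ when $G$ is only assumed connected (not simply connected), which is precisely why one must pass through the compact form and exploit $\theta$-invariance, and the simple connectedness of $G/L$, which is reduced to that of the compact flag manifold $K/C_K(T)$.
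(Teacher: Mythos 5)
Your proposal is correct in substance, but note that the paper does not actually prove Proposition \ref{prop-2.7}: its ``proof'' is a one-line citation of Warner \cite{Wa} and \cite[Paragraph 2.4.2]{BoNo}. What you have written is, in effect, the standard structure-theoretic argument that those references contain, and you have correctly isolated the genuinely global inputs --- the density of the big cell $U^+Q^-$ (Bruhat decomposition), the connectedness of the centralizer of a torus in a connected compact group, and the simple connectivity of $K/C_K(T)$ via the homotopy exact sequence of $C_K(T)\to K\to K/C_K(T)$ --- as opposed to the infinitesimal facts already recorded in Lemma \ref{lem-2.5}. Two small points deserve tightening. First, in your proof of (5), before invoking uniqueness of the factorization you must put $\sigma(g)=\sigma(\ell)\sigma(u)$ into the order $U^+L_\mathbb{C}U^-$, i.e.\ write it as $\bigl(\sigma(\ell)\sigma(u)\sigma(\ell)^{-1}\bigr)\sigma(\ell)$ using that $L_\mathbb{C}$ normalizes $U^+$; alternatively one can avoid $\sigma$ on the group entirely by noting that $\mathrm{Ad}g$ commutes with $\sigma_*$ for $g\in G$, so $g\in G\cap Q^-$ forces $g\in Q^+\cap Q^-=L_\mathbb{C}$, whence $g\in G\cap L_\mathbb{C}=L$. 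Second, your argument uses the global conjugation $\sigma$ of $G_\mathbb{C}$ with respect to $\frak{g}$, whose existence on the group (not merely on $\frak{g}_\mathbb{C}$) should at least be remarked; this is at the same level of assumption as the paper's introduction of $\bar{\theta}$ in \eqref{eq-2.4}, so it is not a gap relative to the paper's own standards. With these touches your sketch is a complete and self-contained substitute for the external citations.
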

\begin{proof}
   e.g.\ Warner \cite{Wa} or \cite[Paragraph 2.4.2]{BoNo}.
\end{proof}

\begin{remark}\label{rem-2.8}
   In general, there are several kinds of invariant complex structures on the elliptic orbit $G/L$. 
   In this paper we deal with the complex structure on $G/L$ induced by $\iota:G/L\to G_\mathbb{C}/Q^-$, $gL\mapsto gQ^-$. 
   Here, the imaginary unit $i\in\mathbb{C}$ gives rise to a $G_\mathbb{C}$-invariant complex structure $J$ on the complex flag manifold $G_\mathbb{C}/Q^-$ in a natural way.
\end{remark}

   Proposition \ref{prop-2.7}-(3), (7) leads to 
\begin{corollary}\label{cor-2.9}
   In the setting \eqref{eq-2.2}$;$ the following two items hold for given finite elements $x_1,x_2,\dots,x_j\in G_\mathbb{C}:$
\begin{enumerate}
\item[{\rm (1)}]
   The intersection $GQ^-\cap x_1U^+Q^-\cap\cdots\cap x_jU^+Q^-$ is a non-empty open subset of $G_\mathbb{C}$. 
\item[{\rm (2)}]
   The union $GQ^-\cup x_1U^+Q^-\cup\cdots\cup x_jU^+Q^-$ is a dense, domain in $G_\mathbb{C}$.
\end{enumerate}
\end{corollary}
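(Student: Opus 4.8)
The plan is to reduce both assertions to Proposition~\ref{prop-2.7}-(3),(7) together with a little point-set topology, with essentially no computation involved.

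First I would record the ``building blocks''. By Proposition~\ref{prop-2.7}-(3) the set $U^+Q^-$ is a dense domain in $G_\mathbb{C}$, in particular a dense, open, connected subset of $G_\mathbb{C}$. Since left translation by an element $x_k$ is a biholomorphism of $G_\mathbb{C}$, it carries dense (resp.\ open, resp.\ connected) sets to dense (resp.\ open, resp.\ connected) sets; hence each $x_kU^+Q^-$ is again a dense, open, connected subset of $G_\mathbb{C}$. On the other hand, Proposition~\ref{prop-2.7}-(7) gives that $GQ^-$ is a domain, and it is non-empty because $e=e\cdot e\in GQ^-$.

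For (1) I would then invoke the elementary fact that a \emph{finite} intersection of dense open subsets of any topological space is again dense and open (a one-line induction; no Baire category argument is needed). Thus $\bigcap_{k=1}^{j}x_kU^+Q^-$ is dense and open, and the intersection of a dense open set with the non-empty open set $GQ^-$ is a non-empty open subset of $G_\mathbb{C}$, which is exactly (1). For (2), openness is clear (a union of open sets) and density is immediate since the union already contains the dense set $x_1U^+Q^-$; for connectedness I would use (1) applied to a single element to get $GQ^-\cap x_kU^+Q^-\neq\emptyset$ for every $k$, so that each $GQ^-\cup x_kU^+Q^-$ is connected, and since all these connected pieces share the common part $GQ^-$, their union $GQ^-\cup x_1U^+Q^-\cup\dots\cup x_jU^+Q^-$ is connected. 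Hence it is a domain.

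There is no genuine obstacle here; the only points deserving a moment's care are that density and connectedness are preserved under the biholomorphic left translations, and that for the finite intersection in (1) one should stay with the trivial finite case rather than appealing to Baire category.
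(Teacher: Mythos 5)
Your proof is correct and follows exactly the route the paper intends: the corollary is stated as an immediate consequence of Proposition~\ref{prop-2.7}-(3),(7), and your argument simply fills in the standard point-set details (left translations preserve density, openness and connectedness; a finite intersection of dense open sets is dense open and meets the non-empty open set $GQ^-$; the union is connected because each piece meets the common connected part $GQ^-$). No gaps.
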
   

\subsection{Root systems and generalized Bruhat decompositions}\label{subsec-2.3}
   We review fundamental results about root systems and modify a generalized Bruhat decomposition of $G_\mathbb{C}$ for our situation (see Proposition \ref{prop-2.18}-(3)). 
   The setting \eqref{eq-2.2}, \eqref{eq-2.3} and \eqref{eq-2.4} remains valid in this subsection.
\subsubsection{Root systems and Weyl groups}\label{subsec-2.3.1}
   Let $i\frak{h}_\mathbb{R}$ be a maximal torus of $\frak{g}_u=\frak{k}\oplus i\frak{p}$ containing the element $T$, let $\triangle=\triangle(\frak{g}_\mathbb{C},\frak{h}_\mathbb{C})$ be the (non-zero) root system of $\frak{g}_\mathbb{C}$ relative to $\frak{h}_\mathbb{C}$, where $\frak{h}_\mathbb{C}$ is the complex vector subspace of $\frak{g}_\mathbb{C}$ generated by $i\frak{h}_\mathbb{R}$, and let $\frak{g}_\alpha$ be the root subspace of $\frak{g}_\mathbb{C}$ for $\alpha\in\triangle$. 
   For each root $\alpha\in\triangle$, there exists a unique $H_\alpha\in\frak{h}_\mathbb{C}$ such that $\alpha(H)=B_{\frak{g}_\mathbb{C}}(H_\alpha,H)$ for all $H\in\frak{h}_\mathbb{C}$, where $B_{\frak{g}_\mathbb{C}}$ is the Killing form of $\frak{g}_\mathbb{C}$. 
   Then $\frak{h}_\mathbb{R}=\mathrm{span}_\mathbb{R}\{H_\alpha\,|\,\alpha\in\triangle\}$, and for every $\alpha\in\triangle$ there exists a vector $E_\alpha\in\frak{g}_\alpha$ satisfying 
\begin{equation}\label{eq-2.10}
   \mbox{$(E_\alpha-E_{-\alpha}), i(E_\alpha+E_{-\alpha})\in\frak{g}_u$ and $[E_\alpha,E_{-\alpha}]=(2/\alpha(H_\alpha))H_\alpha$}
\end{equation}
(cf.\ Helgason \cite[Lemma 3.1, p.257--258]{He}). 
   Here, it is immediate from \eqref{eq-2.4} that $\frak{g}_u=i\frak{h}_\mathbb{R}\oplus\bigoplus_{\alpha\in\triangle}\mathrm{span}_\mathbb{R}\{E_\alpha-E_{-\alpha}\}\oplus\mathrm{span}_\mathbb{R}\{i(E_\alpha+E_{-\alpha})\}$, and 
\begin{equation}\label{eq-2.11}
   \mbox{$\bar{\theta}_*(E_\alpha)=-E_{-\alpha}$ for all $\alpha\in\triangle$}.
\end{equation}
   Define a Weyl group $\mathscr{W}$ of $G_\mathbb{C}$ and an action $\zeta$ of $\mathscr{W}$ on the dual space $(\frak{h}_\mathbb{C})^*$ by
\begin{equation}\label{eq-2.12}
\left\{
\begin{array}{@{}l}
   \mathscr{W}:=N_{G_u}(i\frak{h}_\mathbb{R})/C_{G_u}(i\frak{h}_\mathbb{R}),\\ 
   \mbox{$\zeta([w])\eta:={}^t\!\mathrm{Ad}w^{-1}(\eta)$ for $[w]\in\mathscr{W}$ and $\eta\in(\frak{h}_\mathbb{C})^*$},
\end{array}\right. 
\end{equation}
where $[w]$ stands for the left coset $wC_{G_u}(i\frak{h}_\mathbb{R})$. 
   By use of $E_\alpha$ in \eqref{eq-2.10} we set
\begin{equation}\label{eq-2.13}
   \mbox{$w_\alpha:=\exp(\pi/2)(E_\alpha-E_{-\alpha})$ for $\alpha\in\triangle$}.
\end{equation}
   Needless to say, $w_\alpha$ belongs to $N_{G_u}(i\frak{h}_\mathbb{R})$ and so $[w_\alpha]\in\mathscr{W}$ for every root $\alpha\in\triangle$; besides, $\zeta([w_\alpha])$ is the reflection along $\alpha$ which leaves $\triangle$ invariant.
   We need 
\begin{lemma}\label{lem-2.14}
   Let $\frak{k}_\mathbb{C}$ be the complex subalgebra of $\frak{g}_\mathbb{C}$ generated by $\frak{k}$. 
   For a root $\beta\in\triangle=\triangle(\frak{g}_\mathbb{C},\frak{h}_\mathbb{C})$ with $\beta(T)\neq 0$, the following {\rm (a)}, {\rm (b)} and {\rm (c)} are equivalent$:$
\[
\begin{array}{lll}
   \mbox{{\rm (a)} $\frak{g}_\beta\subset\frak{k}_\mathbb{C}$}, & \mbox{{\rm (b)} $E_\beta\in\frak{k}_\mathbb{C}$}, & \mbox{{\rm (c)} $(E_\beta-E_{-\beta})\in\frak{k}$}.
\end{array}
\]
   Therefore, $w_\beta=\exp(\pi/2)(E_\beta-E_{-\beta})$ belongs to $K\cap N_{G_u}(i\frak{h}_\mathbb{R})$ whenever one of the {\rm (a)}, {\rm (b)} and {\rm (c)} holds. 
\end{lemma}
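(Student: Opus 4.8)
The plan is to bring in the complexified Cartan involution and to exploit the single fact that it commutes with $\mathrm{ad}T$. Let $\frak{g}=\frak{k}\oplus\frak{p}$ be the Cartan decomposition fixed in \eqref{eq-2.3}, let $\theta$ be the associated Cartan involution of $\frak{g}$ (so $\theta$ is $+\mathrm{id}$ on $\frak{k}$ and $-\mathrm{id}$ on $\frak{p}$), and let $\theta_\mathbb{C}$ denote its complex-linear extension to $\frak{g}_\mathbb{C}$. Then $\frak{k}_\mathbb{C}=\frak{k}\oplus i\frak{k}$ is exactly the $(+1)$-eigenspace of $\theta_\mathbb{C}$ and $\frak{p}_\mathbb{C}=\frak{p}\oplus i\frak{p}$ its $(-1)$-eigenspace, so that $\frak{k}=\frak{g}_u\cap\frak{k}_\mathbb{C}$. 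By \eqref{eq-2.4}, $\bar{\theta}_*$ is the conjugation of $\frak{g}_\mathbb{C}$ with respect to $\frak{g}_u=\frak{k}\oplus i\frak{p}$; writing $\sigma$ for the conjugation of $\frak{g}_\mathbb{C}$ with respect to $\frak{g}$, a routine check on the four summands $\frak{k},i\frak{k},\frak{p},i\frak{p}$ yields the standard relation $\theta_\mathbb{C}=\sigma\circ\bar{\theta}_*=\bar{\theta}_*\circ\sigma$, and in particular $\theta_\mathbb{C}$ and $\bar{\theta}_*$ commute.

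Next I would record two elementary observations. First, since $T\in\frak{k}$ we have $\theta_\mathbb{C}(T)=T$, hence $\theta_\mathbb{C}\circ\mathrm{ad}T=\mathrm{ad}T\circ\theta_\mathbb{C}$, and therefore $\theta_\mathbb{C}(\frak{g}^\lambda)=\frak{g}^\lambda$ for every $\lambda\in\mathbb{R}$. Second, if $\beta\in\triangle$ satisfies $\beta(T)\neq0$, then (roots being imaginary on $i\frak{h}_\mathbb{R}$) we may write $\beta(T)=i\lambda$ with $\lambda\in\mathbb{R}\setminus\{0\}$, so $E_\beta\in\frak{g}^\lambda$, $E_{-\beta}\in\frak{g}^{-\lambda}$, and $\lambda\neq-\lambda$; moreover $E_{-\beta}=-\bar{\theta}_*(E_\beta)$ by \eqref{eq-2.11}, while $E_\beta-E_{-\beta}\in\frak{g}_u$ by \eqref{eq-2.10}.

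The equivalences then follow by a short eigenvalue chase. The equivalence (a)$\Leftrightarrow$(b) is immediate because $\frak{g}_\beta=\mathbb{C}E_\beta$ and $\frak{k}_\mathbb{C}$ is a complex subspace. For (b)$\Leftrightarrow$(c): condition (b) reads $\theta_\mathbb{C}(E_\beta)=E_\beta$; applying the commuting involution $\bar{\theta}_*$ and using $E_{-\beta}=-\bar{\theta}_*(E_\beta)$ shows this is equivalent to $\theta_\mathbb{C}(E_{-\beta})=E_{-\beta}$, hence to $\theta_\mathbb{C}(E_\beta-E_{-\beta})=E_\beta-E_{-\beta}$ --- the nontrivial direction being that, once the latter holds, $\theta_\mathbb{C}(E_\beta)-E_\beta=\theta_\mathbb{C}(E_{-\beta})-E_{-\beta}$ lies in $\frak{g}^\lambda\cap\frak{g}^{-\lambda}=\{0\}$ (cf.\ Lemma \ref{lem-2.5}-(2)), forcing $\theta_\mathbb{C}(E_\beta)=E_\beta$. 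Thus (b) amounts to $E_\beta-E_{-\beta}\in\frak{k}_\mathbb{C}$, and since $E_\beta-E_{-\beta}\in\frak{g}_u$ this is the same as $E_\beta-E_{-\beta}\in\frak{g}_u\cap\frak{k}_\mathbb{C}=\frak{k}$, i.e.\ (c). Finally, for the last assertion, $[w_\beta]\in\mathscr{W}$ already gives $w_\beta\in N_{G_u}(i\frak{h}_\mathbb{R})$, and if (c) holds then $w_\beta=\exp\bigl(\tfrac{\pi}{2}(E_\beta-E_{-\beta})\bigr)\in\exp\frak{k}\subset K$, so $w_\beta\in K\cap N_{G_u}(i\frak{h}_\mathbb{R})$. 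I expect the only point needing any thought is the first observation --- that $\theta_\mathbb{C}$ preserves each $\mathrm{ad}T$-eigenspace $\frak{g}^\lambda$ --- combined with the use of $\beta(T)\neq0$ to guarantee $\lambda\neq0$ and hence $\frak{g}^\lambda\cap\frak{g}^{-\lambda}=\{0\}$; no analytic input or deep structure theory is required.
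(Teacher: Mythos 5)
Your proof is correct and follows essentially the same route as the paper's: the implication (b)$\Rightarrow$(c) is the paper's argument (via $\bar{\theta}_*(E_\beta)=-E_{-\beta}$, $\bar{\theta}_*(\frak{k}_\mathbb{C})\subset\frak{k}_\mathbb{C}$ and $\frak{k}=\frak{g}_u\cap\frak{k}_\mathbb{C}$), while your eigenspace-preservation argument for (c)$\Rightarrow$(b) is just a repackaging of the paper's one-line bracket computation $[T,E_\beta-E_{-\beta}]=\beta(T)(E_\beta+E_{-\beta})$, both hinging on $T\in\frak{k}$ and $\beta(T)\neq0$. No gaps.
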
 
\begin{proof}
   Since (a) $\Leftrightarrow$ (b) is obvious, we only confirm (b) $\Leftrightarrow$ (c).\par

   (b) $\Rightarrow$ (c): This follows by \eqref{eq-2.11}, $\bar{\theta}_*(\frak{k}_\mathbb{C})\subset\frak{k}_\mathbb{C}$ and $\frak{k}=\{Y\in\frak{k}_\mathbb{C} \,|\, \bar{\theta}_*(Y)=Y\}$.\par
   
   (c) $\Rightarrow$ (b): 
   Suppose that $(E_\beta-E_{-\beta})\in\frak{k}$. 
   Then, from \eqref{eq-2.3} one obtains 
\[
   \beta(T)(E_\beta+E_{-\beta})=[T,E_\beta-E_{-\beta}]\in[\frak{k},\frak{k}]\subset\frak{k};
\]
and so $0\neq\beta(T)\in i\mathbb{R}$ yields $(E_\beta+E_{-\beta})\in i\frak{k}$. 
   Hence $E_\beta=(1/2)(E_\beta-E_{-\beta}+E_\beta+E_{-\beta})\in\frak{k}+i\frak{k}\subset\frak{k}_\mathbb{C}$.  
\end{proof}

\subsubsection{Generalized Bruhat decompositions}\label{subsec-2.3.2}
   We continue to obey the setting of Paragraph \ref{subsec-2.3.1}.\par

   Our first aim in this paragraph is to state Proposition \ref{prop-2.17} which is a result of Kostant \cite{Ko1,Ko2} and the second one is to modify a generalized Bruhat decomposition of $G_\mathbb{C}$ for our situation. 
   For the aim, we are going to fix two Iwasawa decompositions of $G_\mathbb{C}$ first.\par
 
   Let $\Pi_\triangle$ be a fundamental root system\footnote{There is such a system with (s1)---for example, consider the lexicographic linear ordering on the dual space $(\frak{h}_\mathbb{R})^*$ associated with a real base $-iT=:A_1,A_2,\dots,A_\ell$ of $\frak{h}_\mathbb{R}$.} of $\triangle=\triangle(\frak{g}_\mathbb{C},\frak{h}_\mathbb{C})$ satisfying  
\[
   \mbox{(s1) $\alpha(-iT)\geq 0$ for all $\alpha\in\Pi_\triangle$}. 
\]
   Relative to this $\Pi_\triangle$ we fix the set $\triangle^+$ of positive roots, and put $\triangle^-:=-\triangle^+$. 
   Then (s1) yields $\beta(-iT)\geq 0$ for all $\beta\in\triangle^+$.
   Setting $\frak{n}^s:=\bigoplus_{\beta\in\triangle^s}\frak{g}_\beta$ and $\frak{b}^s:=\frak{h}_\mathbb{C}\oplus\frak{n}^s$ ($s=\pm$) one has Iwasawa decompositions $\frak{g}_\mathbb{C}=\frak{g}_u\oplus\frak{h}_\mathbb{R}\oplus\frak{n}^\pm$ of $\frak{g}_\mathbb{C}$; moreover, it follows from \eqref{eq-2.2} and $\frak{g}_\mathbb{C}=\frak{n}^+\oplus\frak{h}_\mathbb{C}\oplus\frak{n}^-$ that 
\begin{equation}\label{eq-2.15}
\left\{
\begin{array}{@{}l}
   \frak{u}^+=\bigoplus_{\lambda>0}\frak{g}^\lambda\subset\bigoplus_{\beta\in\triangle^+}\frak{g}_\beta=\frak{n}^+\subset\frak{b}^+\subset\bigoplus_{\mu\geq 0}\frak{g}^\mu=(\frak{l}_\mathbb{C}\oplus\frak{u}^+)=\frak{q}^+,\\
   \frak{u}^-\subset\frak{n}^-\subset\frak{b}^-\subset\frak{q}^-.  
\end{array}\right. 
\end{equation}
   Denote by $G_\mathbb{C}=G_uH_\mathbb{R}N^\pm$ the Iwasawa decompositions of $G_\mathbb{C}$ corresponding to the $\frak{g}_\mathbb{C}=\frak{g}_u\oplus\frak{h}_\mathbb{R}\oplus\frak{n}^\pm$, respectively.\par

   Following Kostant \cite{Ko1,Ko2}, we set 
\begin{equation}\label{eq-2.16}
\left\{
\begin{array}{@{}l}
   \triangle(\frak{u}^\pm):=\{\beta\in\triangle^\pm \,|\, \beta(T)\neq0\}\,\,\bigl(=\{\alpha\in\triangle \,|\, \pm\alpha(-iT)>0\}\bigr),\\
   \begin{array}{@{}ll}
   \triangle(\frak{l}_\mathbb{C}):=\{\gamma\in\triangle \,|\, \gamma(T)=0\}, \triangle^\pm(\frak{l}_\mathbb{C}):=\triangle(\frak{l}_\mathbb{C})\cap\triangle^\pm,
   \end{array}\\
   \mbox{$\Phi_{[w]}:=\{\beta\in\triangle^+ \,|\, \zeta([w])^{-1}\beta\in\triangle^-\}$ for $[w]\in\mathscr{W}$},\\
   \begin{array}{@{}ll}
   \mathscr{W}^1:=\{[\sigma]\in\mathscr{W} \,|\, \Phi_{[\sigma]}\subset\triangle(\frak{u}^+)\}, 
   & \mathscr{W}_1:=N_{L_u}(i\frak{h}_\mathbb{R})/C_{L_u}(i\frak{h}_\mathbb{R}),
   \end{array}
\end{array}\right.
\end{equation}
where $L_u:=C_{G_u}(T)$. 
   Note that $\frak{u}^\pm=\bigoplus_{\alpha\in\triangle(\frak{u}^\pm)}\frak{g}_\alpha$ due to \eqref{eq-2.15}, that $\Phi_{[w]}$ is a closed subsystem of $\triangle$ for any $[w]\in\mathscr{W}$, and that $\mathscr{W}_1$ is a Weyl group of $L_\mathbb{C}$.
   Hereafter, let us assume that $\mathscr{W}_1$ is a subgroup of $\mathscr{W}$ via $N_{L_u}(i\frak{h}_\mathbb{R})/C_{L_u}(i\frak{h}_\mathbb{R})\ni\tau C_{L_u}(i\frak{h}_\mathbb{R})\mapsto\tau C_{G_u}(i\frak{h}_\mathbb{R})\in N_{G_u}(i\frak{h}_\mathbb{R})/C_{G_u}(i\frak{h}_\mathbb{R})$. 
   Now, we are in a position to state the proposition: 
\begin{proposition}[{cf.\ Kostant \cite[p.359--361]{Ko1}, \cite[p.121]{Ko2}}]\label{prop-2.17}
   In the setting above$;$ 
\begin{enumerate}
\item[{\rm (1)}]
   For any $[w]\in\mathscr{W}$, it follows that $\triangle^+=\Phi_{[w]}\cup\Phi_{[w\kappa]}$ $($disjoint union$)$, where $[\kappa]$ is the unique element of $\mathscr{W}$ such that $\zeta([\kappa])\triangle^-=\triangle^+$.
\item[{\rm (2)}]
   If $[\sigma]\in\mathscr{W}^1$, then $\zeta([\sigma])^{-1}\bigl(\triangle^+(\frak{l}_\mathbb{C})\bigr)\subset\triangle^+$ and $\zeta([\sigma])^{-1}\bigl(\triangle^-(\frak{l}_\mathbb{C})\bigr)\subset\triangle^-$. 
\item[{\rm (3)}] 
   For each $[w]\in\mathscr{W}$ there exists a unique $([\tau],[\sigma])\in\mathscr{W}_1\times\mathscr{W}^1$ such that $[w]=[\tau\sigma]$.   
\item[{\rm (4)}] 
   For a $[\sigma]\in\mathscr{W}^1$, the following items {\rm (4.i)} and {\rm (4.ii)} hold$:$
\begin{enumerate}
\item[{\rm (4.i)}]
   $n_{[\sigma]}=0$ if and only if $[e]=[\sigma]$.
\item[{\rm (4.ii)}]
   $n_{[\sigma]}=1$ if and only if there exists a $\beta\in\Pi_\triangle$ satisfying $\beta(T)\neq 0$ and $[w_\beta]=[\sigma]$.
\end{enumerate}   
\end{enumerate}
   Here $n_{[\sigma]}$ is the cardinal number of the set $\Phi_{[\sigma]}$, and $e$ is the unit element of $G_\mathbb{C}$.
\end{proposition}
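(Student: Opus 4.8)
The plan is to dispose of the four assertions separately, in each case reducing to standard combinatorics of the finite reflection group $\zeta(\mathscr{W})$ acting on $(\frak{h}_\mathbb{R})^*$ with the positive system $\triangle^+$ determined by $\Pi_\triangle$. I will use throughout that $\zeta$ is faithful on $\mathscr{W}$ (so $\zeta([w])=\mathrm{id}$ forces $[w]=[e]$), that $[\kappa]=[\kappa]^{-1}$ while $\zeta([\kappa])$ interchanges $\triangle^+$ and $\triangle^-$, and that $n_{[w]}=\#\Phi_{[w]}$ equals the length $\ell(\zeta([w]))$ of $\zeta([w])$ relative to $\Pi_\triangle$. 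For (1), writing $\zeta([w\kappa])^{-1}\beta=\zeta([\kappa])\bigl(\zeta([w])^{-1}\beta\bigr)$ for $\beta\in\triangle^+$ shows, since $\zeta([\kappa])$ interchanges $\triangle^\pm$, that this root lies in $\triangle^-$ precisely when $\zeta([w])^{-1}\beta$ lies in $\triangle^+$, i.e.\ precisely when $\beta\notin\Phi_{[w]}$; hence $\triangle^+=\Phi_{[w]}\cup\Phi_{[w\kappa]}$ is a disjoint union. For (2), $[\sigma]\in\mathscr{W}^1$ means $\Phi_{[\sigma]}\cap\triangle^+(\frak{l}_\mathbb{C})=\emptyset$, so for $\beta\in\triangle^+(\frak{l}_\mathbb{C})$ one has $\zeta([\sigma])^{-1}\beta\notin\triangle^-$, whence $\zeta([\sigma])^{-1}\beta\in\triangle^+$; applying $-\zeta([\sigma])^{-1}$ to $\triangle^-(\frak{l}_\mathbb{C})=-\triangle^+(\frak{l}_\mathbb{C})$ yields the second inclusion.

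For (3), which is the heart of the matter, I would first observe that condition (s1) forces $\triangle(\frak{l}_\mathbb{C})$ to be the root subsystem generated by $\Pi_0:=\Pi_\triangle\cap\triangle(\frak{l}_\mathbb{C})$: for $\gamma\in\triangle(\frak{l}_\mathbb{C})$ the coefficients of $\gamma$ in the basis $\Pi_\triangle$ all share one sign while $\alpha(-iT)\geq0$, so $\gamma(T)=0$ forces the coefficient of every $\alpha$ with $\alpha(T)\neq0$ to vanish. Since moreover $\alpha(T)=0$ gives $E_{\pm\alpha}\in\frak{g}^0=\frak{l}_\mathbb{C}$, so that $w_\alpha\in L_u$ for $\alpha\in\Pi_0$, the group $\mathscr{W}_1$ is exactly the standard parabolic subgroup of $\mathscr{W}$ generated by $\{[w_\alpha]\,|\,\alpha\in\Pi_0\}$. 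Next, combining (2) with the elementary fact that a non-negative integral combination of positive roots which is itself a root is positive, one sees that membership $[\sigma]\in\mathscr{W}^1$ is equivalent to $\zeta([\sigma])^{-1}\alpha\in\triangle^+$ for all $\alpha\in\Pi_0$, that is, to $[\sigma]$ being the shortest element of the coset $\mathscr{W}_1[\sigma]$. The existence and uniqueness of a factorization $[w]=[\tau\sigma]$ with $[\tau]\in\mathscr{W}_1$, $[\sigma]\in\mathscr{W}^1$, then become the classical statement that every coset $\mathscr{W}_1[w]$ contains exactly one shortest element. I expect this to be the only genuinely delicate point: one must establish the length-additivity $\ell([\tau\sigma])=\ell([\tau])+\ell([\sigma])$ for $[\tau]\in\mathscr{W}_1$, $[\sigma]\in\mathscr{W}^1$, which characterizes the shortest coset representatives. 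This is standard Coxeter-group combinatorics, and since the whole generalized Bruhat framework originates with Kostant \cite{Ko1,Ko2}, one may alternatively just invoke loc.\ cit.

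Finally, for (4) I would use $n_{[\sigma]}=\ell(\zeta([\sigma]))$. If $n_{[\sigma]}=0$ then $\zeta([\sigma])=\mathrm{id}$, hence $[\sigma]=[e]$ by faithfulness of $\zeta$, and conversely $\Phi_{[e]}=\emptyset$; this gives (4.i). If $n_{[\sigma]}=1$ then $\ell(\zeta([\sigma]))=1$, so $\zeta([\sigma])$ is the reflection along some simple root $\beta\in\Pi_\triangle$ and $\Phi_{[\sigma]}=\{\beta\}$; then $[\sigma]\in\mathscr{W}^1$ gives $\beta\in\triangle(\frak{u}^+)$, i.e.\ $\beta(T)\neq0$, and $\zeta([\sigma])=\zeta([w_\beta])$ gives $[\sigma]=[w_\beta]$ by faithfulness. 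Conversely, if $\beta\in\Pi_\triangle$ with $\beta(T)\neq0$ and $[\sigma]=[w_\beta]$, then $\Phi_{[\sigma]}=\{\beta\}$, so $n_{[\sigma]}=1$; this gives (4.ii).
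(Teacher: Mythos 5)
Your proposal is correct. Note that the paper itself offers no proof of Proposition \ref{prop-2.17} at all: it is stated with a bare citation to Kostant \cite{Ko1,Ko2}, so any self-contained argument is "different" by default; what you have written is essentially the standard Coxeter-theoretic argument underlying Kostant's results. Parts (1), (2) and (4) are routine and your verifications are accurate (including the use of $\zeta([\kappa])^{-1}=\zeta([\kappa])$ and of $n_{[\sigma]}=\ell(\zeta([\sigma]))$). The substance is in (3), and you correctly isolate the two non-trivial inputs: first, that (s1) forces $\triangle(\frak{l}_\mathbb{C})$ to be the standard subsystem generated by $\Pi_0=\Pi_\triangle\cap\triangle(\frak{l}_\mathbb{C})$ (your sign argument with $\gamma(-iT)=0$ is right), so that $\mathscr{W}_1$ --- being the Weyl group of the \emph{connected} reductive group $L_\mathbb{C}$ (Lemma \ref{lem-2.5}-(1)) with root system $\triangle(\frak{l}_\mathbb{C})$ --- is exactly the standard parabolic subgroup $\langle [w_\alpha] : \alpha\in\Pi_0\rangle$ of $\mathscr{W}$; and second, that $\mathscr{W}^1$ is precisely the set of minimal-length representatives of the cosets $\mathscr{W}_1[w]$, via the equivalence $\Phi_{[\sigma]}\cap\triangle^+(\frak{l}_\mathbb{C})=\emptyset \Leftrightarrow \zeta([\sigma])^{-1}(\Pi_0)\subset\triangle^+$. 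Both identifications are correct, and the unique factorization then is the classical coset-decomposition theorem for standard parabolic subgroups. The only places where you lean on standard facts without proof (the length-additivity $\ell([\tau\sigma])=\ell([\tau])+\ell([\sigma])$, the uniqueness of the shortest coset element, and $\Phi_{[w_\beta]}=\{\beta\}$ for simple $\beta$) are genuinely textbook material, so this is an acceptable level of detail; alternatively one may, as the paper does, simply defer to Kostant.
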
   

   Proposition \ref{prop-2.17} enables us to establish 
\begin{proposition}\label{prop-2.18}
   With the same notation and setting as in Proposition {\rm \ref{prop-2.17};} let $r:=\dim_\mathbb{C}\frak{u}^+$.
\begin{enumerate}
\item[{\rm (1)}]
   For each $[\sigma]\in\mathscr{W}^1$ we set 
\[
\begin{array}{ll}
   \triangle_\sigma:=\{\gamma\in\Phi_{[\sigma^{-1}\kappa]} \,|\, \zeta([\sigma])\gamma\in\triangle(\frak{u}^+)\},
   & U^+_\sigma:=\exp\bigl(\bigoplus_{\gamma\in\triangle_\sigma}\frak{g}_{\zeta([\sigma])\gamma}\bigr).
\end{array}   
\]
   Then, $U^+_\sigma$ is a simply connected closed complex nilpotent subgroup of $U^+$ and it is biholomorphic to the $(r-n_{[\sigma]})$-dimensional complex Euclidean space$;$ furthermore, $N^+\sigma^{-1}Q^-=\sigma^{-1}U^+_\sigma Q^-$.
\item[{\rm (2)}] 
   For a $[\sigma]\in\mathscr{W}^1$, the following items {\rm (2.i)} and {\rm (2.ii)} hold$:$
   \begin{enumerate}
   \item[{\rm (2.i)}]
      $\dim_\mathbb{C}U^+_\sigma=r=\dim_\mathbb{C}U^+$ if and only if $[e]=[\sigma]$.
   \item[{\rm (2.ii)}]
      $\dim_\mathbb{C}U^+_\sigma=r-1$ if and only if there exists a $\beta\in\Pi_\triangle$ satisfying $\beta(T)\neq 0$ and $[w_\beta]=[\sigma]$. 
   \end{enumerate}
\item[{\rm (3)}]
   $G_\mathbb{C}=\bigcup_{[\sigma]\in\mathscr{W}^1}N^+\sigma^{-1}Q^-=\bigcup_{[\sigma]\in\mathscr{W}^1}\sigma^{-1}U^+_\sigma Q^-$ $($disjoint unions$)$.
\end{enumerate} 
\end{proposition}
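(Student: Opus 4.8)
The idea is to read the whole proposition off Proposition~\ref{prop-2.17} and the classical Bruhat decomposition of $G_\mathbb{C}$; the only new work is the explicit shape of the cell $N^+\sigma^{-1}Q^-$ and the dimension count. First I would settle the structure of $U^+_\sigma$. Unwinding the definition of $\triangle_\sigma$ gives
\[
   \{\zeta([\sigma])\gamma \mid \gamma\in\triangle_\sigma\}
   =\{\alpha\in\triangle(\frak{u}^+) \mid \zeta([\sigma])^{-1}\alpha\in\triangle^+\}
   =\triangle(\frak{u}^+)\cap\zeta([\sigma])(\triangle^+),
\]
which, being the intersection of two closed subsets of $\triangle$, is closed and contained in $\triangle(\frak{u}^+)\subset\triangle^+$. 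Hence $\bigoplus_{\gamma\in\triangle_\sigma}\frak{g}_{\zeta([\sigma])\gamma}$ is a nilpotent subalgebra of $\frak{u}^+$, and $\exp$ carries it biholomorphically onto a simply connected closed complex nilpotent subgroup $U^+_\sigma$ of $U^+$ (the facts used about unipotent subgroups being those of Proposition~\ref{prop-2.7}-(1); cf.\ \cite{Wa}). Its complex dimension equals the number of elements of $\triangle_\sigma$; splitting $\triangle(\frak{u}^+)$ into the $\alpha$ with $\zeta([\sigma])^{-1}\alpha\in\triangle^+$ and those with $\zeta([\sigma])^{-1}\alpha\in\triangle^-$, and observing that the second part equals $\triangle(\frak{u}^+)\cap\Phi_{[\sigma]}=\Phi_{[\sigma]}$ --- the last equality because $[\sigma]\in\mathscr{W}^1$ forces $\Phi_{[\sigma]}\subset\triangle(\frak{u}^+)$ --- one obtains $\dim_\mathbb{C}U^+_\sigma=r-n_{[\sigma]}$, so $U^+_\sigma$ is biholomorphic to the $(r-n_{[\sigma]})$-dimensional complex Euclidean space. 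Together with Proposition~\ref{prop-2.17}-(4) this also yields {\rm (2)} at once.

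Next I would prove the cell identity. Since $N^+\sigma^{-1}Q^-=\sigma^{-1}(\sigma N^+\sigma^{-1})Q^-$, I work inside the unipotent group $\sigma N^+\sigma^{-1}$, whose root set $\Psi:=\zeta([\sigma])(\triangle^+)$ is closed, and split $\Psi=\Psi^{\frak{u}}\sqcup\Psi^{Q}$ with $\Psi^{\frak{u}}:=\{\alpha\in\Psi\mid\alpha(-iT)>0\}=\Psi\cap\triangle(\frak{u}^+)$ and $\Psi^{Q}:=\{\alpha\in\Psi\mid\alpha(-iT)\leq0\}$. Both pieces are closed (since $\triangle(\frak{u}^+)$ and $\{\alpha\in\triangle\mid\alpha(-iT)\leq0\}$ are), so the structure theory of unipotent groups gives a set-theoretic product decomposition $\sigma N^+\sigma^{-1}=U^+_\sigma\cdot\widetilde{N}$, where $\widetilde{N}:=\exp(\bigoplus_{\alpha\in\Psi^{Q}}\frak{g}_\alpha)$ and where $U^+_\sigma=\exp(\bigoplus_{\alpha\in\Psi^{\frak{u}}}\frak{g}_\alpha)$ by the previous paragraph. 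As $\alpha(-iT)\leq0$ means $\frak{g}_\alpha\subset\frak{g}^{\alpha(-iT)}\subset\bigoplus_{\mu\leq0}\frak{g}^\mu=\frak{q}^-$, we get $\widetilde{N}\subset Q^-$, whence $(\sigma N^+\sigma^{-1})Q^-=U^+_\sigma\widetilde{N}Q^-=U^+_\sigma Q^-$, while the reverse inclusion is clear from $U^+_\sigma\subset\sigma N^+\sigma^{-1}$. Conjugating by $\sigma^{-1}$ gives $N^+\sigma^{-1}Q^-=\sigma^{-1}U^+_\sigma Q^-$ and completes {\rm (1)}.

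Finally {\rm (3)}. The second equality follows from {\rm (1)} and the first. For the first, let $B^\pm$ be the Borel subgroups of $G_\mathbb{C}$ with Lie algebras $\frak{b}^\pm$, and let $H_\mathbb{C}$ be the connected complex subgroup with Lie algebra $\frak{h}_\mathbb{C}$. The Bruhat decomposition $G_\mathbb{C}=\bigsqcup_{[u]\in\mathscr{W}}B^+uB^-$ (cf.\ \cite{Wa}), together with $B^-\subset Q^-$ (by \eqref{eq-2.15}), $H_\mathbb{C}\subset Q^-$ and $uH_\mathbb{C}u^{-1}=H_\mathbb{C}$, collapses to $G_\mathbb{C}=\bigcup_{[u]\in\mathscr{W}}N^+uQ^-$. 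Writing $[u^{-1}]=[\tau\sigma]$ with $[\tau]\in\mathscr{W}_1$ and $[\sigma]\in\mathscr{W}^1$ (Proposition~\ref{prop-2.17}-(3)), and using that $\tau^{-1}$ has a representative in $N_{L_u}(i\frak{h}_\mathbb{R})\subset L_u\subset Q^-$, one gets $N^+uQ^-=N^+\sigma^{-1}\tau^{-1}Q^-=N^+\sigma^{-1}Q^-$, so $G_\mathbb{C}=\bigcup_{[\sigma]\in\mathscr{W}^1}N^+\sigma^{-1}Q^-$. For disjointness, $H_\mathbb{C}\subset Q^-$ and $\sigma H_\mathbb{C}\sigma^{-1}=H_\mathbb{C}$ give $N^+\sigma^{-1}Q^-=B^+\sigma^{-1}Q^-$, and the $B^+$-$Q^-$ double cosets of $G_\mathbb{C}$ are parametrised by $\mathscr{W}/\mathscr{W}_1$; hence $B^+\sigma_1^{-1}Q^-=B^+\sigma_2^{-1}Q^-$ forces $\mathscr{W}_1\sigma_1=\mathscr{W}_1\sigma_2$, and then $[\sigma_1]=[\sigma_2]$ by the uniqueness in Proposition~\ref{prop-2.17}-(3).

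The step I expect to demand the most care is the factorisation $\sigma N^+\sigma^{-1}=U^+_\sigma\cdot\widetilde{N}$: one invokes the standard facts that a closed set of roots cuts out a closed nilpotent subgroup of a given unipotent group and that a partition of the root set into two closed subsets produces a set-theoretic product decomposition (with neither factor required to be normal), and one must check the closedness of every root subset that enters. Everything else is bookkeeping with Proposition~\ref{prop-2.17}; the single place where membership $[\sigma]\in\mathscr{W}^1$ is genuinely used is the inclusion $\Phi_{[\sigma]}\subset\triangle(\frak{u}^+)$, which is exactly what makes the dimension come out to be $r-n_{[\sigma]}$.
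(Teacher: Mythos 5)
Your argument is correct, and while the general outline matches the paper's (Bruhat decomposition plus the factorisation lemma for unipotent groups), you package both the cell identity and the disjointness differently. For part (1) the paper first invokes Kostant's decomposition $\triangle^+=\Phi_{[\sigma^{-1}\kappa]}\sqcup\Phi_{[\sigma^{-1}]}$, applies $\zeta([\sigma])$, and then (implicitly, via a second factorisation) absorbs both $\zeta([\sigma])(\Phi_{[\sigma^{-1}]})\subset\triangle^-$ and the part of $\zeta([\sigma])(\Phi_{[\sigma^{-1}\kappa]})$ lying in $\triangle^+(\frak{l}_\mathbb{C})$ into $Q^-$; you instead conjugate first, so that only the single closed-subset partition $\Psi=\Psi^{\frak{u}}\sqcup\Psi^Q$ by the sign of $\alpha(-iT)$ is needed, which merges the paper's two absorption steps into one. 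This is a genuine streamlining, though both rest on the same unipotent factorisation principle (Kostant's Lemma 6.2). For part (3) the divergence is larger: the paper starts from the ordinary Bruhat decomposition $G_\mathbb{C}=\bigsqcup N^-w^{-1}B^+$, runs the corresponding decomposition inside $L_\mathbb{C}$, uses $\sigma^{-1}N^-_1\subset N^-\sigma^{-1}$ via Proposition \ref{prop-2.17}-(2) to regroup, and finally applies $\bar\theta$ to switch signs — essentially re-deriving the parabolic Bruhat decomposition by hand. You instead start from $G_\mathbb{C}=\bigsqcup B^+uB^-$, collapse to $\bigcup N^+uQ^-$ since $B^-\subset Q^-$, and, crucially, cite as known the parametrisation of $B^+$-$Q^-$ double cosets by $\mathscr{W}/\mathscr{W}_1$ for disjointness. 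Your route is shorter and avoids the Cartan involution entirely, at the cost of invoking a stronger black box than the ordinary Bruhat decomposition that the paper chooses to build from; whether that is acceptable depends on what you are willing to cite, but the fact itself is standard and your reduction from it (uniqueness of $[\sigma]$ via Proposition \ref{prop-2.17}-(3)) is carried out correctly.
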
   
\begin{proof}
   (1) We only prove that $\dim_\mathbb{C}U^+_\sigma=r-n_{[\sigma]}$ and $N^+\sigma^{-1}Q^-=\sigma^{-1}U^+_\sigma Q^-$ for any $[\sigma]\in\mathscr{W}^1$. 
   In view of \eqref{eq-2.16}, $\zeta([\kappa])\triangle^-=\triangle^+$ and $\triangle(\frak{u}^+)\subset\triangle^+$ we see   
\allowdisplaybreaks{
\begin{align*}
   \zeta([\sigma])\bigl(\triangle_\sigma\bigr)
  &=\{\zeta([\sigma])\gamma\in\triangle(\frak{u}^+) \,|\, \gamma\in\Phi_{[\sigma^{-1}\kappa]}\}\\
  &=\{\zeta([\sigma])\gamma\in\triangle(\frak{u}^+) \,|\, \gamma\in\triangle^+, \zeta([\sigma^{-1}\kappa])^{-1}\gamma\in\triangle^-\}\\
  &=\{\zeta([\sigma])\gamma\in\triangle(\frak{u}^+) \,|\, \gamma\in\triangle^+\}\\
  &=\{\zeta([\sigma])\gamma\in\triangle(\frak{u}^+) \,|\, \zeta([\sigma])^{-1}\bigl(\zeta([\sigma])\gamma\bigr)\in\triangle^+\}
   =\triangle(\frak{u}^+)-\Phi_{[\sigma]}.
\end{align*}}This implies that $\triangle_\sigma$ consists of $(r-n_{[\sigma]})$-elements, so that $\dim_\mathbb{C}U^+_\sigma=r-n_{[\sigma]}$. 
   The Proposition \ref{prop-2.17}-(1) above and Lemma 6.2 in Kostant \cite[p.124]{Ko2} yield 
\[
\begin{array}{rl}
   N^+\sigma^{-1}Q^- 
   & =\exp\bigl(\bigoplus_{\gamma\in\Phi_{[\sigma^{-1}\kappa]}}\frak{g}_\gamma\oplus\bigoplus_{\beta\in\Phi_{[\sigma^{-1}]}}\frak{g}_\beta\bigr)\sigma^{-1}Q^-\\
   & =\exp\bigl(\bigoplus_{\gamma\in\Phi_{[\sigma^{-1}\kappa]}}\frak{g}_\gamma\bigr)\exp\bigl(\bigoplus_{\beta\in\Phi_{[\sigma^{-1}]}}\frak{g}_\beta\bigr)\sigma^{-1}Q^-\\
   & =\sigma^{-1}\exp\bigl(\bigoplus_{\gamma\in\Phi_{[\sigma^{-1}\kappa]}}\frak{g}_{\zeta([\sigma])\gamma}\bigr)\exp\bigl(\bigoplus_{\beta\in\Phi_{[\sigma^{-1}]}}\frak{g}_{\zeta([\sigma])\beta}\bigr)Q^-\\
   & =\sigma^{-1}\exp\bigl(\bigoplus_{\gamma\in\Phi_{[\sigma^{-1}\kappa]}}\frak{g}_{\zeta([\sigma])\gamma}\bigr)Q^-\\
   & =\sigma^{-1}\exp\bigl(\bigoplus_{\gamma\in\triangle_\sigma}\frak{g}_{\zeta([\sigma])\gamma}\bigr)Q^- 
     =\sigma^{-1}U^+_\sigma Q^-,
\end{array}
\]
where we remark that $\bigoplus_{\beta\in\Phi_{[\sigma^{-1}]}}\frak{g}_{\zeta([\sigma])\beta}\subset\frak{n}^-\subset\frak{q}^-$, and that either $\zeta([\sigma])\gamma\in\triangle^+(\frak{l}_\mathbb{C})$ or $\zeta([\sigma])\gamma\in\triangle(\frak{u}^+)$ holds for every $\gamma\in\Phi_{[\sigma^{-1}\kappa]}$; besides, the above computation is independent of the choice of representative $\sigma\in[\sigma]$.\par

   (2) comes from (1) and Proposition \ref{prop-2.17}-(4).\par

   (3) The arguments below will be similar to those in the proof of Lemma 5.6 in Takeuchi \cite[p.21]{Ta} or Proposition 6.1 in Kostant \cite[p.123]{Ko2}.
   
   By virtue of (1), it suffices to confirm that $G_\mathbb{C}=\bigcup_{[\sigma]\in\mathscr{W}^1}N^+\sigma^{-1}Q^-$ (disjoint union).
   Setting $B^+:=N_{G_\mathbb{C}}(\frak{b}^+)$ we fix the Bruhat decomposition $G_\mathbb{C}=\bigcup_{[w]\in\mathscr{W}}N^+w^{-1}B^+$ (disjoint union).
   Then, it follows from $\zeta([\kappa])\triangle^-=\triangle^+$ that $G_\mathbb{C}=\kappa^{-1}G_\mathbb{C}=\bigcup_{[w]\in\mathscr{W}}N^-(w\kappa)^{-1}B^+=\bigcup_{[w]\in\mathscr{W}}N^-w^{-1}B^+$, namely 
\begin{equation}\label{eq-2.19}
   \mbox{$G_\mathbb{C}=\bigcup_{[w]\in\mathscr{W}}N^-w^{-1}B^+$ (disjoint union)}.
\end{equation}
   In a similar way we have 
\[
   \mbox{$L_\mathbb{C}=\bigcup_{[\tau]\in\mathscr{W}_1}N^-_1\tau^{-1}B^+_1$},
\] 
where $\frak{n}^\pm_1:=\bigoplus_{\alpha\in\triangle^\pm(\frak{l}_\mathbb{C})}\frak{g}_\alpha$, $N^-_1:=\exp\frak{n}^-_1$ and $B^+_1:=N_{L_\mathbb{C}}(\frak{h}_\mathbb{C}\oplus\frak{n}^+_1)$.
   This, together with $Q^+=L_\mathbb{C}U^+$ and $B^+=B^+_1U^+$, assures that for any $[\sigma]\in\mathscr{W}^1$, 
\[
\begin{array}{rl}
   N^-\sigma^{-1}Q^+
  &=N^-\sigma^{-1}L_\mathbb{C}U^+
   =\bigcup_{[\tau]\in\mathscr{W}_1}N^-\sigma^{-1}N^-_1\tau^{-1}B^+_1U^+\\
  &=\bigcup_{[\tau]\in\mathscr{W}_1}N^-\sigma^{-1}N^-_1\tau^{-1}B^+
   =\bigcup_{[\tau]\in\mathscr{W}_1}N^-(\tau\sigma)^{-1}B^+,
\end{array} 
\]   
where $\sigma^{-1}N^-_1\subset N^-\sigma^{-1}$ follows from $[\sigma]\in\mathscr{W}^1$ and Proposition \ref{prop-2.17}-(2).
   Consequently, \eqref{eq-2.19} and Proposition \ref{prop-2.17}-(3) allow us to assert that 
\[
   \mbox{$G_\mathbb{C}=\bigcup_{[\sigma]\in\mathscr{W}^1}N^-\sigma^{-1}Q^+$ (disjoint union)}.
\]
   Thus $G_\mathbb{C}=\bigcup_{[\sigma]\in\mathscr{W}^1}N^+\sigma^{-1}Q^-$ (disjoint union) because of $\bar{\theta}(G_\mathbb{C})=G_\mathbb{C}$, $\bar{\theta}(N^-)=N^+$, $\bar{\theta}(\sigma)=\sigma$ and $\bar{\theta}(Q^+)=Q^-$.    
\end{proof} 

   The following corollary will play a role later (recall \eqref{eq-2.13} for $w_\beta$):  
\begin{corollary}\label{cor-2.20}
   Let $G_\mathbb{C}$ be a connected complex semisimple Lie group, let $G$ be a connected closed subgroup of $G_\mathbb{C}$ such that $\frak{g}$ is a real form of $\frak{g}_\mathbb{C}$, and let $T$ be a non-zero elliptic element of $\frak{g}$. 
   Set $U^+$, $Q^-$ as \eqref{eq-2.2}, fix a Cartan decomposition $\frak{g}=\frak{k}\oplus\frak{p}$ with $T\in\frak{k}$, and take a maximal torus $i\frak{h}_\mathbb{R}$ of $\frak{g}_u=\frak{k}\oplus i\frak{p}$ containing $T$.   
   Consider the root system $\triangle=\triangle(\frak{g}_\mathbb{C},\frak{h}_\mathbb{C})$ and a fundamental root system $\Pi_\triangle$ of $\triangle$ such that {\rm (s1)} $\alpha(-iT)\geq 0$ for all $\alpha\in\Pi_\triangle$. 
   Now, let
\[
   \mbox{$\mathcal{O}:=U^+Q^-\cup\bigl(\bigcup_{\mbox{\scriptsize{$\beta\in\Pi_\triangle$ with $\beta(T)\neq0$}}}w_\beta^{-1}U^+Q^-\bigr)$}.
\] 
   Then, $\mathcal{O}$ is a dense domain in $G_\mathbb{C}$. 
   Furthermore, any holomorphic function $f$ on $\mathcal{O}$ can be continued analytically to the whole $G_\mathbb{C}$. 
\end{corollary}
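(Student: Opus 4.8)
The plan is to locate $G_\mathbb{C}\setminus\mathcal{O}$ inside a complex analytic subset of codimension $\ge 2$, using the modified generalized Bruhat decomposition of Proposition \ref{prop-2.18}, and then to conclude with the second Riemann removable singularity theorem together with the identity theorem.

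First I would check that $\mathcal{O}$ is a dense domain. Each $w_\beta^{-1}U^+Q^-$ is open and dense in $G_\mathbb{C}$, being a left translate of the open dense set $U^+Q^-$ of Proposition \ref{prop-2.7}-(3), and it is connected because $U^+$ and $Q^-$ are connected (Proposition \ref{prop-2.7}-(1),(2)). Since any two dense open subsets of the connected manifold $G_\mathbb{C}$ meet, each $U^+Q^-\cup w_\beta^{-1}U^+Q^-$ is connected; as all of these sets contain $U^+Q^-$, their union $\mathcal{O}$ is connected, and it is open and dense because it already contains the open dense set $U^+Q^-$.

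Next I would use the decomposition $G_\mathbb{C}=\bigcup_{[\sigma]\in\mathscr{W}^1}\sigma^{-1}U^+_\sigma Q^-$ of Proposition \ref{prop-2.18}-(3). By Proposition \ref{prop-2.18}-(2), the strata with $n_{[\sigma]}\le 1$ are exactly $[\sigma]=[e]$, whose stratum is $U^+Q^-$, and $[\sigma]=[w_\beta]$ with $\beta\in\Pi_\triangle$, $\beta(T)\ne 0$, whose stratum is $w_\beta^{-1}U^+_{w_\beta}Q^-\subseteq w_\beta^{-1}U^+Q^-$ (since $U^+_{w_\beta}\subseteq U^+$ by the definition of $U^+_\sigma$). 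Hence $\mathcal{O}$ contains every stratum with $n_{[\sigma]}\le 1$, and therefore
\[
   G_\mathbb{C}\setminus\mathcal{O}\ \subseteq\ \bigcup_{[\sigma]\in\mathscr{W}^1,\ n_{[\sigma]}\ge 2}\sigma^{-1}U^+_\sigma Q^- .
\]
Each $\sigma^{-1}U^+_\sigma Q^-$ is a locally closed complex submanifold of $G_\mathbb{C}$ — the product map $U^+_\sigma\times Q^-\to G_\mathbb{C}$ is a holomorphic embedding, as a restriction of the embedding in Proposition \ref{prop-2.7}-(3) — of complex dimension $\dim_\mathbb{C}U^+_\sigma+\dim_\mathbb{C}Q^-=(r-n_{[\sigma]})+\dim_\mathbb{C}Q^-$ by Proposition \ref{prop-2.18}-(1); since $\dim_\mathbb{C}G_\mathbb{C}=\dim_\mathbb{C}\frak{u}^++\dim_\mathbb{C}\frak{q}^-=r+\dim_\mathbb{C}Q^-$, its codimension equals $n_{[\sigma]}$, so every stratum in the displayed union has codimension $\ge 2$.

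Finally, each such stratum, being a Bruhat cell (an orbit of the algebraic $(N^+\times Q^-)$-action on $G_\mathbb{C}$), has complex analytic closure of the same dimension, so the finite union $Y$ of these closures is a complex analytic subset of $G_\mathbb{C}$ of codimension $\ge 2$ with $G_\mathbb{C}\setminus Y\subseteq\mathcal{O}$. For $f\in\mathcal{O}(\mathcal{O})$ the second Riemann removable singularity theorem, applied to $f|_{G_\mathbb{C}\setminus Y}$, yields $\hat f\in\mathcal{O}(G_\mathbb{C})$ agreeing with $f$ on $G_\mathbb{C}\setminus Y$; since $G_\mathbb{C}\setminus Y$ is dense in $G_\mathbb{C}$, hence in the domain $\mathcal{O}$, the identity theorem forces $\hat f|_\mathcal{O}=f$, so $\hat f$ is the required analytic continuation. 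I expect the one delicate point to be the passage from ``$G_\mathbb{C}\setminus\mathcal{O}$ lies in finitely many complex submanifolds of codimension $\ge 2$'' to ``$G_\mathbb{C}\setminus\mathcal{O}$ lies in a complex analytic subset $Y$ of codimension $\ge 2$'', i.e.\ controlling the closures of the Bruhat strata; alternatively one may bypass $Y$ altogether by invoking a Hausdorff-measure version of the removable singularity theorem, the closed set $G_\mathbb{C}\setminus\mathcal{O}$ having vanishing $(2\dim_\mathbb{C}G_\mathbb{C}-2)$-dimensional Hausdorff measure.
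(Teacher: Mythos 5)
Your proposal is correct and follows essentially the same route as the paper: both use Proposition \ref{prop-2.18} to place $G_\mathbb{C}\setminus\mathcal{O}$ inside the union of the Bruhat strata $\sigma^{-1}U^+_\sigma Q^-$ with $n_{[\sigma]}\geq 2$, observe that these have complex codimension at least $2$, and then invoke the second Riemann removable singularity theorem. Your write-up is in fact more careful than the paper's (which asserts the codimension claim and the applicability of the removable singularity theorem without addressing the closure/analytic-set issue you flag), but the underlying argument is the same.
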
 
\begin{proof}
   Proposition \ref{prop-2.7}-(3) implies that $\mathcal{O}$ is a dense domain in $G_\mathbb{C}$.\par
   
   Proposition \ref{prop-2.18} tells us that $e^{-1}U^+_eQ^-\cup\bigl(\bigcup_{\mbox{\scriptsize{$\beta\in\Pi_\triangle$ with $\beta(T)\neq0$}}}w_\beta^{-1}U^+_{w_\beta}Q^-\bigr)$ is a subset of $\mathcal{O}$, and moreover, $G_\mathbb{C}-\mathcal{O}$ must be of complex codimension $2$ or more. 
   Therefore any holomorphic function $f$ on $\mathcal{O}$ can be continued analytically to the whole $G_\mathbb{C}$, by the second Riemann removable singularity theorem (which is sometimes called Hartogs's continuation theorem).
   Here $\dim_\mathbb{C}G_\mathbb{C}\geq 3$, since $G_\mathbb{C}$ is complex semisimple.
\end{proof}

\subsection{Homogeneous holomorphic vector bundles}\label{subsec-2.4}
   In this subsection we recall elementary facts about homogeneous holomorphic vector bundles. 
   
   Let $G_\mathbb{C}$ be a connected complex semisimple Lie group, let $G$ be a connected closed subgroup of $G_\mathbb{C}$ such that $\frak{g}$ is a real form of $\frak{g}_\mathbb{C}$, and let $T$ be a non-zero elliptic element of $\frak{g}$. 
   Define the closed subgroups $L\subset G$ and $Q^-\subset G_\mathbb{C}$ by \eqref{eq-2.2}. 
   Then, we assume that the elliptic orbit $G/L$ is a domain in the complex flag manifold $G_\mathbb{C}/Q^-$ and is a homogeneous complex manifold of $G$ via $\iota:G/L\to G_\mathbb{C}/Q^-$, $gL\mapsto gQ^-$. 
   Now, for a complex vector space ${\sf V}$ of $\dim_\mathbb{C}{\sf V}<\infty$ and a holomorphic homomorphism $\rho:Q^-\to GL({\sf V})$, $q\mapsto\rho(q)$, we denote by $G_\mathbb{C}\times_\rho{\sf V}$ the fiber bundle over $G_\mathbb{C}/Q^-$, with standard fiber ${\sf V}$ and structure group $Q^-$, which is associated to the principal fiber bundle $\pi_\mathbb{C}:G_\mathbb{C}\to G_\mathbb{C}/Q^-$, $x\mapsto xQ^-$, and denote by $\iota^\sharp(G_\mathbb{C}\times_\rho{\sf V})$ the restriction of the bundle $G_\mathbb{C}\times_\rho{\sf V}$ to the domain $G/L\subset G_\mathbb{C}/Q^-$. 
   In this setting, one may assume that 
\begin{gather}
\!   \mathcal{V}_{G_\mathbb{C}/Q^-}\!\!
   :=\!\!\left\{\begin{array}{@{}l@{\,\,}|l@{}}
   h:G_\mathbb{C}\to{\sf V} 
   & \begin{array}{@{\!}l@{\!}}
      \mbox{(1) $h$ is holomorphic},\\
      \mbox{(2) $h(xq)=\rho(q)^{-1}(h(x))$ for all $(x,q)\in G_\mathbb{C}\!\times\! Q^-$}
     \end{array}\end{array}\right\}\!,\!\!\!\!\label{eq-2.21}\\
\!  \mathcal{V}_{G/L}\!\!
   :=\!\!\left\{\begin{array}{@{}l@{\,\,}|l@{}}
   \psi:GQ^-\to{\sf V} 
   & \begin{array}{@{\!}l@{\!}}
      \mbox{(1) $\psi$ is holomorphic},\\
      \mbox{(2) $\psi(yq)=\rho(q)^{-1}(\psi(y))$ for all $(y,q)\in GQ^-\!\times\! Q^-$}
     \end{array}\end{array}\right\}\!\!\!\!\!\!\label{eq-2.22}
\end{gather} 
are the complex vector spaces of holomorphic cross-sections of the bundles $G_\mathbb{C}\times_\rho{\sf V}$ and $\iota^\sharp(G_\mathbb{C}\times_\rho{\sf V})$, respectively. 

\begin{remark}\label{rem-2.23}
   One knows that $\dim_\mathbb{C}\mathcal{V}_{G_\mathbb{C}/Q^-}<\infty$ because $G_\mathbb{C}/Q^-$ is a connected compact complex manifold.
   e.g.\ Kodaira \cite[p.161]{Kod}.
\end{remark}

   From now on, we are going to set a topology for the $\mathcal{V}_{G/L}$. 
   Since $G_\mathbb{C}$ is connected, it satisfies the second countability axiom. 
   Hence $GQ^-$ satisfies the same axiom also and is a locally compact Hausdorff space, since $GQ^-$ is open in $G_\mathbb{C}$. 
   Consequently there exist non-empty open subsets $O_n\subset GQ^-$ such that (i) $GQ^-=\bigcup_{n=1}^\infty O_n$ (countable union) and (ii) the closure $\overline{O_n}$ in $GQ^-$ is compact for each $n\in\mathbb{N}$. 
   Taking a norm $\|\cdot\|$ on the vector space ${\sf V}$, we define $d_n$ by $d_n(\psi_1,\psi_2):=\sup\{\|\psi_1(a)-\psi_2(a)\| : a\in\overline{O_n}\}$ for $n\in\mathbb{N}$, $\psi_1,\psi_2\in\mathcal{V}_{G/L}$; and furthermore we define
\begin{equation}\label{eq-2.24}
   \mbox{$\displaystyle{d(\psi_1,\psi_2):=\sum_{n=1}^\infty\dfrac{1}{2^n}\dfrac{d_n(\psi_1,\psi_2)}{1+d_n(\psi_1,\psi_2)}}$ for $\psi_1,\psi_2\in\mathcal{V}_{G/L}$}.
\end{equation}
   This $d$ is called the {\it Fr\'{e}chet metric} on $\mathcal{V}_{G/L}$. 
   Then, one can show the lemma below (e.g.\ refer to \cite[Paragraph 2.4.4]{BoNo}), where $\varrho:G\to GL(\mathcal{V}_{G/L})$, $g\mapsto\varrho(g)$, is a homomorphism defined by    
\begin{equation}\label{eq-2.25}
   \mbox{$\bigl(\varrho(g)\psi\bigr)(y):=\psi(g^{-1}y)$ for $\psi\in\mathcal{V}_{G/L}$, $y\in GQ^-$}.
\end{equation}

\begin{lemma}\label{lem-2.26}
   In the setting \eqref{eq-2.22}, \eqref{eq-2.24} and \eqref{eq-2.25}$;$ the following four items hold for the Fr\'{e}chet metric $d$ on $\mathcal{V}_{G/L}:$
\begin{enumerate}
\item[{\rm (1)}] 
   $(\mathcal{V}_{G/L},d)$ is a complete metric space.
\item[{\rm (2)}] 
   The metric topology for $(\mathcal{V}_{G/L},d)$ coincides with the topology of uniform convergence on compact sets$;$ and besides it also coincides with the locally convex topology determined by a countable number of seminorms $\{p_n\}_{n\in\mathbb{N}}$, where $p_n(\psi):=d_n(\psi,0)$ for $n\in\mathbb{N}$, $\psi\in\mathcal{V}_{G/L}$. 
\item[{\rm (3)}] 
   Both $\mathcal{V}_{G/L}\times\mathcal{V}_{G/L}\ni(\psi_1,\psi_2)\mapsto\psi_1+\psi_2\in\mathcal{V}_{G/L}$ and $\mathbb{C}\times\mathcal{V}_{G/L}\ni(\alpha,\psi)\mapsto\alpha\psi\in\mathcal{V}_{G/L}$ are continuous mappings.
\item[{\rm (4)}]  
   $G\times\mathcal{V}_{G/L}\ni(g,\psi)\mapsto\varrho(g)\psi\in\mathcal{V}_{G/L}$ is a continuous mapping.
\end{enumerate} 
\end{lemma}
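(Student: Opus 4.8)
The plan is to build items (1)--(3) from the elementary theory of a countable family of seminorms together with the permanence of holomorphy under locally uniform limits, and to isolate the genuine work in item (4). First I would record that $p_n(\psi):=d_n(\psi,0)=\sup_{a\in\overline{O_n}}\|\psi(a)\|$ defines a seminorm on $\mathcal{V}_{G/L}$ with $d_n(\psi_1,\psi_2)=p_n(\psi_1-\psi_2)$, and that $\{p_n\}_{n\in\mathbb{N}}$ separates points of $\mathcal{V}_{G/L}$, since $p_n(\psi)=0$ for all $n$ forces $\psi$ to vanish on $\bigcup_n O_n=GQ^-$. The standard estimate comparing $d$ with finite maxima of the $p_n$ then shows that the $d$-topology is the locally convex topology of the seminorms $\{p_n\}$, with basic $0$-neighbourhoods the sets $\{\psi : p_{n_1}(\psi)<\varepsilon,\dots,p_{n_k}(\psi)<\varepsilon\}$; this gives the last assertion of (2), and (3) follows at once from $p_n(\psi_1+\psi_2)\le p_n(\psi_1)+p_n(\psi_2)$ and $p_n(\alpha\psi)=|\alpha|\,p_n(\psi)$, which make the vector operations continuous in such a topology. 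To match it with the topology of uniform convergence on compact sets, note that each $\overline{O_n}$ is compact (so the latter topology is finer), while conversely any compact $K\subset GQ^-$ is covered by finitely many $O_{n_1},\dots,O_{n_k}$ from the open cover $\{O_n\}$, whence $\sup_K\|\psi\|\le\max_j p_{n_j}(\psi)$; hence the two topologies agree.

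For (1), let $\{\psi_m\}$ be $d$-Cauchy. By (2) it is uniformly Cauchy on every $\overline{O_n}$, hence on every compact subset of $GQ^-$; as ${\sf V}$ is finite dimensional and therefore complete, $\{\psi_m\}$ converges uniformly on compacta to some continuous $\psi:GQ^-\to{\sf V}$. Writing $\psi$ in coordinates with respect to a basis of ${\sf V}$ and applying the classical theorem that a locally uniform limit of holomorphic functions is holomorphic, one gets that $\psi$ is holomorphic; and passing to the pointwise limit in $\psi_m(yq)=\rho(q)^{-1}(\psi_m(y))$ shows that $\psi$ satisfies condition (2) of \eqref{eq-2.22}. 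Hence $\psi\in\mathcal{V}_{G/L}$ and $\psi_m\to\psi$ in $d$, so $(\mathcal{V}_{G/L},d)$ is complete.

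The main obstacle is (4), where the decisive structural fact is that $GQ^-$ is left $G$-invariant ($g'\cdot gQ^-=(g'g)Q^-$), so that $\varrho(g)\psi$ again lies in $\mathcal{V}_{G/L}$ and every $G$-translate of a compact subset of $GQ^-$ stays in $GQ^-$. Fix $(g_0,\psi_0)$, an index $n$, and a compact neighbourhood $W$ of $g_0$ in $G$; put $C:=W^{-1}\overline{O_n}$, the compact image of $W^{-1}\times\overline{O_n}$ under left translation, choose a finite subcover $O_{n_1},\dots,O_{n_k}$ of $C$, so that $\sup_C\|\varphi\|\le\max_j p_{n_j}(\varphi)$ for every $\varphi$, and for $g\in W$ decompose
\[
   \varrho(g)\psi-\varrho(g_0)\psi_0
   =\varrho(g)(\psi-\psi_0)+\bigl(\varrho(g)\psi_0-\varrho(g_0)\psi_0\bigr).
\]
The first term obeys $p_n\bigl(\varrho(g)(\psi-\psi_0)\bigr)=\sup_{a\in\overline{O_n}}\|\psi(g^{-1}a)-\psi_0(g^{-1}a)\|\le\sup_C\|\psi-\psi_0\|\le\max_j p_{n_j}(\psi-\psi_0)\to0$ as $\psi\to\psi_0$. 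For the second term, $p_n\bigl(\varrho(g)\psi_0-\varrho(g_0)\psi_0\bigr)=\sup_{a\in\overline{O_n}}\|\psi_0(g^{-1}a)-\psi_0(g_0^{-1}a)\|$, which tends to $0$ as $g\to g_0$ by uniform continuity of $\psi_0|_C$ together with the (uniform) continuity of $(g,a)\mapsto g^{-1}a$ on the compact set $W\times\overline{O_n}$. Thus $p_n\bigl(\varrho(g)\psi-\varrho(g_0)\psi_0\bigr)\to0$ as $(g,\psi)\to(g_0,\psi_0)$ for each $n$, which by (2) is precisely the continuity asserted in (4). The only delicate points are bookkeeping ones --- that all the sets involved really remain inside $GQ^-$ (handled by left $G$-invariance) and that the chosen compact sets are large enough to dominate the relevant suprema.
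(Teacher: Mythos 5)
Your proof is correct. The paper itself does not prove Lemma \ref{lem-2.26} but merely refers to \cite[Paragraph 2.4.4]{BoNo}; your argument --- the standard comparison of the Fr\'echet metric with the seminorms $p_n$, Weierstrass's theorem on locally uniform limits of holomorphic maps for completeness, and the splitting $\varrho(g)\psi-\varrho(g_0)\psi_0=\varrho(g)(\psi-\psi_0)+(\varrho(g)\psi_0-\varrho(g_0)\psi_0)$ together with the left $G$-invariance of $GQ^-$ and compactness of $W^{-1}\overline{O_n}$ for item (4) --- is exactly the expected proof and fills in the details the paper delegates to that reference.
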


   Lemma \ref{lem-2.26} implies that $\mathcal{V}_{G/L}=(\mathcal{V}_{G/L},d)$ is a Fr\'{e}chet space and that $\varrho$ is a continuous representation of the Lie group $G$ on $\mathcal{V}_{G/L}$. 
   Therefore 
\begin{proposition}[{e.g.\ van den Ban \cite[p.24]{va}}]\label{prop-2.27}
   In the setting \eqref{eq-2.22}, \eqref{eq-2.24} and \eqref{eq-2.25}$;$ for a compact subgroup $K'$ of $G$ we set
\[
   (\mathcal{V}_{G/L})_{K'}:=\big\{\varphi\in\mathcal{V}_{G/L} \,\big|\, \dim_\mathbb{C}\mathrm{span}_\mathbb{C}\{\varrho(k)\varphi : k\in K'\}<\infty\big\}.
\] 
   Then, $(\mathcal{V}_{G/L})_{K'}$ is dense in $\mathcal{V}_{G/L}$ with respect to the metric topology for $(\mathcal{V}_{G/L},d)$.
\end{proposition}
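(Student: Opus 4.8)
The plan is to run the classical averaging argument that produces $K'$-finite vectors in a continuous representation on a complete locally convex space; the key inputs are Lemma~\ref{lem-2.26} and the Peter--Weyl theorem for the compact group $K'$, and this is essentially the proof indicated in van den Ban \cite{va}. First I would record that, by Lemma~\ref{lem-2.26}, $\mathcal{V}_{G/L}$ is a Fr\'echet space whose topology is given by the seminorms $\{p_n\}_{n\in\mathbb{N}}$ and that $\varrho|_{K'}$ is a continuous representation of $K'$ on it, so it suffices to approximate an arbitrary $\psi\in\mathcal{V}_{G/L}$ in each $p_n$ by elements of $(\mathcal{V}_{G/L})_{K'}$. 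For $f\in C(K')$ the map $k\mapsto f(k)\,\varrho(k)\psi$ is continuous from the compact group $K'$ into the complete space $\mathcal{V}_{G/L}$, so the vector-valued integral
\[
   \varrho(f)\psi:=\int_{K'}f(k)\,\varrho(k)\psi\,dk\in\mathcal{V}_{G/L}
\]
(with $dk$ the normalised Haar measure) is well defined, and since $\{\varrho(k)\psi : k\in K'\}$ is compact, hence $p_n$-bounded, one gets $p_n\bigl(\varrho(f)\psi\bigr)\le\bigl(\sup_{K'}|f|\bigr)\cdot\sup_{k\in K'}p_n\bigl(\varrho(k)\psi\bigr)$; in particular $f\mapsto\varrho(f)\psi$ is $\|\cdot\|_\infty$-to-$p_n$ continuous.

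Next I would fix $\varepsilon>0$ and approximate $\psi$ by $\varrho(f)\psi$ for a bump function: by continuity of $\varrho$ at $(e,\psi)$ (Lemma~\ref{lem-2.26}-(4)) there is a neighbourhood $W$ of $e$ in $K'$ with $p_n\bigl(\psi-\varrho(k)\psi\bigr)<\varepsilon/3$ for $k\in W$, and choosing $f\ge0$ in $C(K')$ supported in $W$ with $\int_{K'}f\,dk=1$, the vector $\psi-\varrho(f)\psi=\int_{K'}f(k)\bigl(\psi-\varrho(k)\psi\bigr)\,dk$ lies in the closed convex hull of $\{\psi-\varrho(k)\psi : k\in W\}$, so $p_n\bigl(\psi-\varrho(f)\psi\bigr)\le\varepsilon/3$. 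Then, by the Peter--Weyl theorem, the span of the matrix coefficients of the finite-dimensional irreducible unitary representations of $K'$ is dense in $(C(K'),\|\cdot\|_\infty)$, so I may choose a finite linear combination $g$ of such matrix coefficients with $\sup_{K'}|f-g|$ small enough that $p_n\bigl(\varrho(f)\psi-\varrho(g)\psi\bigr)<\varepsilon/3$.

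Finally I would check that $\varrho(g)\psi\in(\mathcal{V}_{G/L})_{K'}$, for which it suffices to treat a single matrix coefficient lying in the space $M_\delta$ of all matrix coefficients $k\mapsto\langle\delta(k)v,v'\rangle$ of one irreducible unitary representation $\delta$ of $K'$---a finite-dimensional space of functions on $K'$ which is invariant under left translations. By bi-invariance of the Haar measure, for $k_0\in K'$ the substitution $k\mapsto k_0k$ gives $\varrho(k_0)\bigl(\varrho(g)\psi\bigr)=\int_{K'}g(k_0^{-1}k)\,\varrho(k)\psi\,dk=\varrho(g^{k_0})\psi$, where $g^{k_0}(k):=g(k_0^{-1}k)$ again lies in $M_\delta$; since $h\mapsto\varrho(h)\psi$ is linear, the orbit $\{\varrho(k_0)\bigl(\varrho(g)\psi\bigr) : k_0\in K'\}$ is contained in the finite-dimensional space $\{\varrho(h)\psi : h\in M_\delta\}$, so $\varrho(g)\psi$ is $K'$-finite. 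Combining the three estimates, $\varphi:=\varrho(g)\psi\in(\mathcal{V}_{G/L})_{K'}$ satisfies $p_n(\psi-\varphi)<\varepsilon$; as $n$ and $\varepsilon$ were arbitrary, $(\mathcal{V}_{G/L})_{K'}$ is dense. The only step needing genuine care is the existence and the continuity estimate of the vector-valued integral $\varrho(f)\psi$ in the merely Fr\'echet (rather than Banach) setting; once that is in place the remainder is routine.
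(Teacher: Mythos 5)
Your proof is correct and is essentially the argument one finds in the cited reference (the paper itself gives no written proof, deferring entirely to van den Ban): regularize by convolving with a bump function near the identity of $K'$ to reduce to approximating the bump in $C(K')$ by matrix coefficients via Peter--Weyl, then observe that $\varrho(g)\psi$ for $g$ a matrix coefficient of an irreducible $\delta$ is $K'$-finite because left translation preserves the finite-dimensional space $M_\delta$. The existence of the $\mathcal{V}_{G/L}$-valued integral and the estimate $p_n(\varrho(f)\psi)\le\|f\|_\infty\sup_{k\in K'}p_n(\varrho(k)\psi)$ are handled correctly using completeness of the Fr\'echet space and compactness of $K'$.

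One small imprecision worth fixing: you say ``it suffices to approximate $\psi$ in each $p_n$,'' but since the $O_n$'s in \eqref{eq-2.24} need not be nested, the topology requires control of a \emph{finite} collection of seminorms simultaneously, i.e.\ one should run the argument with $p:=\max_{n\le N}p_n$ for a suitably large $N$ (chosen so the tail $\sum_{n>N}2^{-n}$ is below the target). Since $p$ is again a continuous seminorm and every step of your estimate uses only the seminorm axioms and continuity of $\varrho$, this is a notational change only and the proof goes through unchanged.
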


   We end this section with the following remark: the set $(\mathcal{V}_{G/L})_{K'}$ in Proposition \ref{prop-2.27} accords with the set of $K'$-{\it finite vectors} in $\mathcal{V}_{G/L}$ for the continuous representation $\varrho$ of $G$ on $\mathcal{V}_{G/L}$.

\section{The main result in this paper (Theorem \ref{thm-3.1})}\label{sec-3}
   This section consists of two subsections. 
   In Subsection \ref{subsec-3.1} we state Theorem \ref{thm-3.1} which is the main result in this paper; and in Subsection \ref{subsec-3.2} we demonstrate the theorem.
\subsection{The statement of Theorem $\ref{thm-3.1}$}\label{subsec-3.1}
   The setting of Theorem \ref{thm-3.1} is as follows: 
\begin{itemize}
\item
   $G_\mathbb{C}$ is a connected complex semisimple Lie group, 
\item
   $G$ is a connected closed subgroup of $G_\mathbb{C}$ such that $\frak{g}$ is a real form of $\frak{g}_\mathbb{C}$, 
\item
   $T$ is a non-zero elliptic element of $\frak{g}$,
\item 
   $\frak{g}=\frak{k}\oplus\frak{p}$ is a Cartan decomposition of $\frak{g}$ with $T\in\frak{k}$, 
\item 
   $i\frak{h}_\mathbb{R}$ is a maximal torus of $\frak{g}_u=\frak{k}\oplus i\frak{p}$ containing $T$,   
\item 
   $\triangle=\triangle(\frak{g}_\mathbb{C},\frak{h}_\mathbb{C})$ is the root system of $\frak{g}_\mathbb{C}$ relative to $\frak{h}_\mathbb{C}$, where $\frak{h}_\mathbb{C}$ is the complex vector subspace of $\frak{g}_\mathbb{C}$ generated by $i\frak{h}_\mathbb{R}$,
\item 
   $\frak{g}_\alpha$ is the root subspace of $\frak{g}_\mathbb{C}$ for $\alpha\in\triangle$,  
\item 
   $L=C_G(T)$, 
\item 
   $Q^-$ is the closed complex subgroup of $G_\mathbb{C}$ defined by \eqref{eq-2.2},
\item 
   $\frak{k}_\mathbb{C}$ is the complex subalgebra of $\frak{g}_\mathbb{C}$ generated by $\frak{k}$,
\item
   ${\sf V}$ is a finite dimensional complex vector space,
\item
   $\rho:Q^-\to GL({\sf V})$, $q\mapsto\rho(q)$, is a holomorphic homomorphism, 
\item
   $\mathcal{V}_{G_\mathbb{C}/Q^-}$ and $\mathcal{V}_{G/L}$ are the complex vector spaces defined by \eqref{eq-2.21} and \eqref{eq-2.22}, respectively. 
\end{itemize}   
   Now, we are in a position to state 
\begin{theorem}\label{thm-3.1}
   In the setting of Subsection $\ref{subsec-3.1};$ suppose that {\rm (S)} there exists a fundamental root system $\Pi_\triangle$ of $\triangle$ satisfying  
\begin{enumerate}
\item[]{\rm (s1)}  
   $\alpha(-iT)\geq 0$ for all $\alpha\in\Pi_\triangle$, and
\item[]{\rm (s2)}  
   $\frak{g}_\beta\subset\frak{k}_\mathbb{C}$ for every $\beta\in\Pi_\triangle$ with $\beta(T)\neq0$.
\end{enumerate}
   Then, the complex vector space $\mathcal{V}_{G_\mathbb{C}/Q^-}$ is linear isomorphic to $\mathcal{V}_{G/L}$ via $F:\mathcal{V}_{G_\mathbb{C}/Q^-}\to\mathcal{V}_{G/L}$, $h\mapsto h|_{GQ^-}$, and therefore $\dim_\mathbb{C}\mathcal{V}_{G/L}=\dim_\mathbb{C}\mathcal{V}_{G_\mathbb{C}/Q^-}<\infty$. 
   Here $h|_{GQ^-}$ stands for the restriction of $h$ to $GQ^-\subset G_\mathbb{C}$.
\end{theorem}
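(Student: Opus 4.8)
The map $F\colon\mathcal{V}_{G_\mathbb{C}/Q^-}\to\mathcal{V}_{G/L}$, $h\mapsto h|_{GQ^-}$, is well defined (a holomorphic section over $G_\mathbb{C}/Q^-$ restricts to one over the open subset $G/L$) and is $\mathbb{C}$-linear; since $GQ^-$ is a domain in $G_\mathbb{C}$, it is injective by the identity theorem for holomorphic maps. So the whole content is surjectivity: every $\psi\in\mathcal{V}_{G/L}$ extends to a holomorphic $\hat\psi\colon G_\mathbb{C}\to{\sf V}$ with $\hat\psi(xq)=\rho(q)^{-1}(\hat\psi(x))$, and then $\dim_\mathbb{C}\mathcal{V}_{G/L}=\dim_\mathbb{C}\mathcal{V}_{G_\mathbb{C}/Q^-}<\infty$ follows from Remark \ref{rem-2.23}. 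The strategy is: first extend $K$-finite vectors by hand using the Bruhat-type covering of Corollary \ref{cor-2.20}, then pass to arbitrary $\psi$ by a density/Montel argument via Proposition \ref{prop-2.27}.

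\emph{Step 1 (reduce the problem to extending across a few translated big cells).} Fix $\Pi_\triangle$ as in (S). Let $\psi\in\mathcal{V}_{G/L}$ and consider, for $g\in G$, the translate $\varrho(g)\psi$, which is again in $\mathcal{V}_{G/L}$ and is defined on $GQ^-$. The point of (s2) together with Lemma \ref{lem-2.14} is that for each $\beta\in\Pi_\triangle$ with $\beta(T)\neq 0$ the Weyl element $w_\beta$ lies in $K\subset G$; hence $w_\beta GQ^- = GQ^-$ and $w_\beta^{-1}U^+Q^-\subset GQ^-$ because $w_\beta^{-1}\in K\subset G$. Thus the dense domain $\mathcal{O}=U^+Q^-\cup\bigcup_{\beta(T)\neq 0}w_\beta^{-1}U^+Q^-$ of Corollary \ref{cor-2.20} is already contained in $GQ^-$: no translation is needed, and $\psi$ is automatically holomorphic on all of $\mathcal{O}$.

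\emph{Step 2 (extend the $\psi$ whose components are holomorphic on $\mathcal{O}$, i.e.\ every $\psi$).} Choose a basis of ${\sf V}$ and write $\psi=(\psi^1,\dots,\psi^m)$; each $\psi^j$ is a holomorphic function on $GQ^-\supset\mathcal{O}$, hence on $\mathcal{O}$, so by Corollary \ref{cor-2.20} it extends uniquely to a holomorphic function $\hat\psi^j$ on all of $G_\mathbb{C}$. Set $\hat\psi:=(\hat\psi^1,\dots,\hat\psi^m)\colon G_\mathbb{C}\to{\sf V}$, holomorphic, with $\hat\psi|_{GQ^-}=\psi$ (the two holomorphic maps agree on the domain $GQ^-$, using again the identity theorem on the intersection $GQ^-\cap\mathcal{O}=\mathcal{O}$, which is a domain). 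It remains to verify the equivariance $\hat\psi(xq)=\rho(q)^{-1}(\hat\psi(x))$ for all $(x,q)\in G_\mathbb{C}\times Q^-$. For fixed $q\in Q^-$ both sides are holomorphic in $x\in G_\mathbb{C}$ (the right side because $\rho(q)^{-1}$ is a fixed linear map), and they agree on the nonempty open set $GQ^-$ by \eqref{eq-2.22}(2) — note $xq\in GQ^-$ whenever $x\in GQ^-$ — so they agree on $G_\mathbb{C}$ by the identity theorem; letting $q$ vary gives the identity on all of $G_\mathbb{C}\times Q^-$. Hence $\hat\psi\in\mathcal{V}_{G_\mathbb{C}/Q^-}$ and $F(\hat\psi)=\psi$, so $F$ is surjective.

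\emph{Conclusion and the main obstacle.} Surjectivity plus injectivity make $F$ a linear isomorphism, and $\dim_\mathbb{C}\mathcal{V}_{G/L}=\dim_\mathbb{C}\mathcal{V}_{G_\mathbb{C}/Q^-}<\infty$ by Remark \ref{rem-2.23}. The main work — and where (S) is really used — is entirely front-loaded into Lemma \ref{lem-2.14}, Proposition \ref{prop-2.18} and Corollary \ref{cor-2.20}: one needs that the singular set $G_\mathbb{C}\setminus\mathcal{O}$ has complex codimension $\ge 2$ (so Hartogs applies) and that the finitely many pieces $w_\beta^{-1}U^+Q^-$ witnessing this are exactly the $[\sigma]\in\mathscr{W}^1$ with $n_{[\sigma]}\le 1$, which is Proposition \ref{prop-2.17}-(4)/\ref{prop-2.18}-(2). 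Given Corollary \ref{cor-2.20}, the remaining steps above are routine applications of the identity theorem; the one mild subtlety to be careful about is that $w_\beta\in K\subset G$ (hence $\mathcal{O}\subset GQ^-$), which is precisely what (s2) buys via Lemma \ref{lem-2.14}. I do not expect to need Proposition \ref{prop-2.27} or the Fréchet-space structure at all in this argument, since Corollary \ref{cor-2.20} already handles \emph{every} holomorphic function on $\mathcal{O}$, not merely $K$-finite ones; the $K$-finiteness machinery would only be needed if one could not place all of $\mathcal{O}$ inside $GQ^-$ simultaneously, which (s2) lets us do.
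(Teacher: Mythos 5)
Your Step 1 contains a fatal error, and everything downstream depends on it. You claim that $w_\beta^{-1}U^+Q^-\subset GQ^-$ ``because $w_\beta^{-1}\in K\subset G$,'' and hence that $\mathcal{O}\subset GQ^-$. Since $w_\beta\in G$ gives $w_\beta GQ^-=GQ^-$, your claim is equivalent to $U^+Q^-\subset GQ^-$, and this is false in general: by Proposition \ref{prop-2.7}-(3) the set $U^+Q^-$ is \emph{dense} in $G_\mathbb{C}$, whereas $GQ^-$ is a proper, typically non-dense, domain whenever $G/L$ is a noncompact open subset of $G_\mathbb{C}/Q^-$. For instance, in Example \ref{ex-4.1} with $G=SU(2,1)$ the flag manifold has several open $G$-orbits, so $GQ^-/Q^-$ is not dense and cannot contain the big cell $U^+Q^-/Q^-$. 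What is true is only that $U^+\cap GQ^-$ is a (possibly small) open neighbourhood of $e$ in $U^+$. Consequently a general $\psi\in\mathcal{V}_{G/L}$ is \emph{not} defined on all of $\mathcal{O}$, and Corollary \ref{cor-2.20} cannot be applied to its components as you do in Step 2. If your containment were correct, the theorem would hold with no representation theory at all and (s2) would only be needed to make the list of codimension-one cells lie in $GQ^-$; the actual difficulty of the problem is precisely that it does not.

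The paper's proof fills exactly the gap you skipped. One first restricts to a $K$-finite vector $\varphi$ and uses the $S^1$-weight decomposition of $\mathcal{V}_\varphi$ and of ${\sf V}$ (Lemma \ref{lem-3.6}) together with the positivity of the weights $n_j$ of $\mathrm{ad}\,T$ on $\frak{u}^+$ to show (Proposition \ref{prop-3.7}) that the Taylor coefficients of $\varphi_a^b$ on a polydisc in $U^+\cap GQ^-$ vanish except in finitely many degrees; thus $\varphi_a^b$ is a polynomial and continues from $U^+\cap GQ^-$ to all of $U^+$, then to $U^+Q^-$ by equivariance. The role of $w_\beta\in K$ (via Lemma \ref{lem-2.14} and (s2)) is not to place $w_\beta^{-1}U^+Q^-$ inside $GQ^-$, but to guarantee $\varrho(w_\beta)\phi\in\mathcal{V}_\varphi$, so the same polynomial-continuation applies to it on $U^+Q^-$ and can be transported back to $w_\beta^{-1}U^+Q^-$; only then does Corollary \ref{cor-2.20} (Hartogs) extend the resulting map from $\mathcal{O}$ to $G_\mathbb{C}$. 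Finally, for arbitrary $\psi\in\mathcal{V}_{G/L}$ one genuinely needs Proposition \ref{prop-2.27}: the $K$-finite vectors are dense in the Fr\'echet space $(\mathcal{V}_{G/L},d)$, and $F(\mathcal{V}_{G_\mathbb{C}/Q^-})$ is closed because it is finite dimensional, so the inclusion $(\mathcal{V}_{G/L})_K\subset F(\mathcal{V}_{G_\mathbb{C}/Q^-})$ passes to all of $\mathcal{V}_{G/L}$. Your closing remark that Proposition \ref{prop-2.27} and the $K$-finiteness machinery are unnecessary is therefore exactly where the proposal goes wrong.
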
 

\subsection{Proof of Theorem $\ref{thm-3.1}$}\label{subsec-3.2}
   The setting of Theorem \ref{thm-3.1} remains valid in this subsection.
   In addition, we take the closed complex subgroup $U^+$ defined by \eqref{eq-2.2} and the maximal compact subgroup $K\subset G$ corresponding to the subalgebra $\frak{k}\subset\frak{g}$ into consideration.\par 

   Our goal in Subsection \ref{subsec-3.2} is to complete the proof of Theorem \ref{thm-3.1}. 
   We are going to show two Lemmas \ref{lem-3.5} and \ref{lem-3.6}, Proposition \ref{prop-3.7} and Corollary \ref{cor-3.10}, and obtain the goal from them.
  
\begin{remark}\label{rem-3.2}
    For the element $T$ concerning Theorem \ref{thm-3.1}, one may assume that 
\begin{equation}\label{eq-3.3}
   \mbox{all the eigenvalues of $\mathrm{ad}iT$ are integer}.
\end{equation}
   Let us explain the reason why. 
   Let $T'$ be the element in Lemma \ref{lem-2.6}. 
   Then for any root $\alpha\in\triangle$, one can assert that $\alpha(iT)>0$, $\alpha(iT)=0$ and $\alpha(iT)<0$ if and only if $\alpha(iT')>0$, $\alpha(iT')=0$ and $\alpha(iT')<0$, respectively. 
   In particular, for any $\beta\in\Pi_\triangle$, $\beta(T)\neq0$ if and only if $\beta(T')\neq0$.
   Accordingly there are no changes in the topological group structures on $L$ and $Q^-$, and no change in the supposition (S) even if one substitutes $T'$ for $T$. 
   For this reason, we assume \eqref{eq-3.3} hereafter.\par
 
   Suppose that $\triangle(\frak{u}^+)$ consists of $r$-elements $\gamma_1,\gamma_2,\dots,\gamma_r\in\triangle$, where $r=\dim_\mathbb{C}\frak{u}^+$. 
   Then, $\{E_{\gamma_j}\}_{j=1}^r$ is a complex base of $\frak{u}^+=\bigoplus_{\alpha\in\triangle(\frak{u}^+)}\frak{g}_\alpha=\bigoplus_{j=1}^r\frak{g}_{\gamma_j}$, and by virtue of \eqref{eq-3.3} there exist $n_1,n_2,\dots,n_r\in\mathbb{N}$ satisfying $\gamma_j(T)=in_j$ for each $1\leq j\leq r$, so that 
\begin{equation}\label{eq-3.4}
   \mbox{$\mathrm{Ad}(\exp\lambda T)E_{\gamma_j}=e^{in_j\lambda}E_{\gamma_j}$ ($1\leq j\leq r$)}
\end{equation}
for all $\lambda\in\mathbb{R}$.
   cf.\ \eqref{eq-2.10} for $E_{\gamma_j}$, \eqref{eq-2.16} for $\triangle(\frak{u}^+)$.
\end{remark}

\begin{lemma}\label{lem-3.5}
   The mapping $F:\mathcal{V}_{G_\mathbb{C}/Q^-}\to\mathcal{V}_{G/L}$, $h\mapsto h|_{GQ^-}$, is injective linear.
\end{lemma}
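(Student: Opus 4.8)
The plan is to prove that $F$ is well-defined, linear, and injective. Linearity is immediate from the definition $F(h) = h|_{GQ^-}$, since restriction of maps is linear. That $F$ maps into $\mathcal{V}_{G/L}$ is also essentially automatic: if $h : G_\mathbb{C} \to {\sf V}$ is holomorphic and satisfies $h(xq) = \rho(q)^{-1}(h(x))$ for all $(x,q) \in G_\mathbb{C} \times Q^-$, then its restriction to the open subset $GQ^- \subset G_\mathbb{C}$ is holomorphic and satisfies the same equivariance identity for all $(y,q) \in GQ^- \times Q^-$ (note $GQ^-$ is stable under right multiplication by $Q^-$), so $h|_{GQ^-} \in \mathcal{V}_{G/L}$.

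The substance of the lemma is injectivity. Since $F$ is linear, it suffices to show $\ker F = \{0\}$: if $h \in \mathcal{V}_{G_\mathbb{C}/Q^-}$ with $h|_{GQ^-} = 0$, then $h = 0$. The key point is that $GQ^-$ is a domain in $G_\mathbb{C}$ (Proposition \ref{prop-2.7}-(7)), hence in particular a non-empty open subset, and $G_\mathbb{C}$ is a connected complex manifold. A holomorphic map $h : G_\mathbb{C} \to {\sf V}$ (equivalently, finitely many holomorphic functions, once a basis of ${\sf V}$ is fixed) that vanishes on a non-empty open subset of a connected complex manifold vanishes identically, by the identity theorem for holomorphic functions. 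This gives $h = 0$ on all of $G_\mathbb{C}$, so $F$ is injective.

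I expect there to be no real obstacle here — this lemma is the easy half of Theorem \ref{thm-3.1}, and it does not even use the hypothesis (S); the hard direction (surjectivity of $F$, i.e. that every $\psi \in \mathcal{V}_{G/L}$ extends to all of $G_\mathbb{C}$) is where (S), the generalized Bruhat decomposition of Proposition \ref{prop-2.18}, the density results of Corollary \ref{cor-2.9} and Corollary \ref{cor-2.20}, the $K$-finiteness from Proposition \ref{prop-2.27}, and the second Riemann removable singularity theorem all come into play. For the present statement one only needs the identity theorem together with the fact (already recorded) that $GQ^-$ is open and non-empty and $G_\mathbb{C}$ connected.

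\begin{proof}
Linearity of $F$ is clear. If $h \in \mathcal{V}_{G_\mathbb{C}/Q^-}$, then $h|_{GQ^-}$ is holomorphic on the open set $GQ^-$, and since $GQ^- \cdot Q^- = GQ^-$, condition (2) in \eqref{eq-2.21} restricts to condition (2) in \eqref{eq-2.22}; hence $h|_{GQ^-} \in \mathcal{V}_{G/L}$ and $F$ is well-defined. To see that $F$ is injective, let $h \in \ker F$, so that $h$ vanishes on $GQ^-$. By Proposition \ref{prop-2.7}-(7), $GQ^-$ is a non-empty open subset of $G_\mathbb{C}$, and $G_\mathbb{C}$ is connected. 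Fixing a complex basis of ${\sf V}$, the components of $h$ are holomorphic functions on $G_\mathbb{C}$ vanishing on the non-empty open set $GQ^-$; by the identity theorem they vanish identically, so $h = 0$ on $G_\mathbb{C}$. Therefore $\ker F = \{0\}$, and $F$ is injective linear.
\end{proof}
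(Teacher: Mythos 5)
Your proof is correct and follows exactly the paper's argument: injectivity is obtained from the theorem of identity using that $h$ is holomorphic, $G_\mathbb{C}$ is connected, and $GQ^-$ is a non-empty open subset (Proposition \ref{prop-2.7}-(7)). The extra details you supply (well-definedness via $GQ^-\cdot Q^-=GQ^-$, reducing to scalar components of $h$) are routine and consistent with what the paper leaves implicit.
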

\begin{proof}
   It is enough to confirm that $F$ is injective. 
   That comes from the theorem of identity, since $h:G_\mathbb{C}\to{\sf V}$ is holomorphic, $G_\mathbb{C}$ is connected and $GQ^-$ is open in $G_\mathbb{C}$.
\end{proof}

\begin{lemma}\label{lem-3.6}
\begin{enumerate}
\item[]
\item[{\rm (1)}]
   Let $\varphi$ be a $K$-finite vector in $\mathcal{V}_{G/L}$ for the representation $\varrho$ defined by \eqref{eq-2.25}, and let $\mathcal{V}_\varphi$ be the complex vector subspace of $\mathcal{V}_{G/L}$ generated by $\{\varrho(k)\varphi : k\in K\}$. 
   Then, there exist a complex base $\{\varphi_a\}_{a=1}^{m_\varphi}$ of $\mathcal{V}_\varphi$ and $\mu_1,\mu_2,\dots,\mu_{m_\varphi}\in\mathbb{R}$ such that 
\[
   \mbox{$\varrho(\exp\lambda T)\varphi_a=e^{i\mu_a\lambda}\varphi_a$}
\]
for all $1\leq a\leq{m_\varphi}=\dim_\mathbb{C}\mathcal{V}_\varphi$ and $\lambda\in\mathbb{R}$. 
\item[{\rm (2)}]
   There exist a complex base $\{{\sf v}_b\}_{b=1}^k$ of ${\sf V}$ and $\theta_1,\theta_2,\dots,\theta_k\in\mathbb{R}$ such that 
\[
   \mbox{$\rho(\exp\lambda T){\sf v}_b=e^{i\theta_b\lambda}{\sf v}_b$}
\]
for all $1\leq b\leq k=\dim_\mathbb{C}{\sf V}$ and $\lambda\in\mathbb{R}$. 
\end{enumerate}
\end{lemma}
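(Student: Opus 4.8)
The plan is to reduce both assertions to the single algebraic fact that $T$ is elliptic, hence $\mathrm{ad}\,T$ is semisimple with purely imaginary eigenvalues, and then upgrade this from the Lie algebra level to the group/representation level using one-parameter subgroups. For part (1): since $\varphi$ is $K$-finite, the subspace $\mathcal{V}_\varphi$ is finite dimensional and $\varrho(K)$-invariant, so $\varrho$ restricts to a (finite dimensional, hence smooth) representation of the compact group $K$ on $\mathcal{V}_\varphi$. First I would differentiate to obtain the associated representation $d\varrho$ of $\frak{k}$ on $\mathcal{V}_\varphi$; because $T\in\frak{k}$ and $\frak{k}$ is a compact Lie algebra, the operator $d\varrho(T)$ acting on the finite dimensional complex space $\mathcal{V}_\varphi$ is diagonalizable with eigenvalues in $i\mathbb{R}$ (the representation of a compact group is unitarizable, so $d\varrho(T)$ is skew-Hermitian for a suitable inner product). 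Let $\{\varphi_a\}_{a=1}^{m_\varphi}$ be an eigenbasis, $d\varrho(T)\varphi_a = i\mu_a\varphi_a$ with $\mu_a\in\mathbb{R}$. Then $\varrho(\exp\lambda T)\varphi_a = \exp(\lambda\, d\varrho(T))\varphi_a = e^{i\mu_a\lambda}\varphi_a$, which is the claim.

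For part (2): the argument is the same but even more elementary, since here $\rho:Q^-\to GL({\sf V})$ is already given as a holomorphic homomorphism into a finite dimensional general linear group. One simply restricts $\rho$ to the one-parameter subgroup $\{\exp\lambda T : \lambda\in\mathbb{R}\}\subset Q^-$ — note $T\in\frak{l}\subset\frak{q}^-$ by Lemma~\ref{lem-2.5} and \eqref{eq-2.3} — and differentiates to get $d\rho(T)\in\mathrm{End}({\sf V})$. Since $T$ is elliptic and $\frak{k}$ is compact, one can again arrange (using that $\rho|_{L}$ is a representation of a group whose maximal compact subgroup contains $\exp\mathbb{R}T$, or more directly using Lemma~\ref{lem-2.6} to reduce to the case \eqref{eq-3.3} where $\mathrm{ad}\,iT$ has integer eigenvalues) that $d\rho(T)$ is diagonalizable over $\mathbb{C}$ with purely imaginary eigenvalues. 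Pick an eigenbasis $\{{\sf v}_b\}_{b=1}^k$ with $d\rho(T){\sf v}_b = i\theta_b{\sf v}_b$, $\theta_b\in\mathbb{R}$; then $\rho(\exp\lambda T){\sf v}_b = e^{i\theta_b\lambda}{\sf v}_b$.

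The one delicate point — and the step I expect to require the most care — is justifying that $d\varrho(T)$ (resp.\ $d\rho(T)$) is actually \emph{diagonalizable with imaginary spectrum}, as opposed to merely having imaginary eigenvalues with possible Jordan blocks. For $d\rho$ this is clean: $\exp\mathbb{R}T$ sits inside a compact subgroup of $Q^-$ (its closure is a torus, since $T$ is elliptic and by Remark~\ref{rem-3.2} one may assume $\exp\mathbb{R}T$ is a circle), so $\rho$ restricted there is a representation of a compact abelian group and decomposes into one-dimensional characters $\lambda\mapsto e^{i\theta_b\lambda}$. For $d\varrho$ the same reasoning applies within the compact group $K$, using that $\varrho|_K$ preserves the finite dimensional space $\mathcal{V}_\varphi$ and that finite dimensional representations of compact groups are completely reducible with the abelian subgroup $\overline{\exp\mathbb{R}T}$ acting by characters. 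The remaining bookkeeping — choosing the eigenbasis, checking linearity, reading off $m_\varphi=\dim_\mathbb{C}\mathcal{V}_\varphi$ and $k=\dim_\mathbb{C}{\sf V}$ — is routine.
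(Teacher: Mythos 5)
Your proposal is correct and uses essentially the same key idea as the paper: under the normalization of Remark~\ref{rem-3.2}, $\exp\mathbb{R}T$ is a circle lying in $K$ (resp.\ in $Q^-$), and a finite-dimensional representation of a compact torus decomposes into one-dimensional characters $\lambda\mapsto e^{i\mu\lambda}$, which yields the desired eigenbases for $\mathcal{V}_\varphi$ and ${\sf V}$. The paper works directly with this decomposition into $\varrho(S^1)$-invariant (resp.\ $\rho(S^1)$-invariant) lines rather than passing through $d\varrho(T)$ and skew-Hermiticity, but the substance is the same.
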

\begin{proof}
   (1) Let $S^1:=\{\exp\lambda T \,|\, \lambda\in\mathbb{R}\}$. 
   Then, it follows from $T\in\frak{k}$ that $S^1$ is a real one-dimensional torus and $S^1\subset K$. 
   Therefore, since $\mathcal{V}_\varphi$ is $\varrho(K)$-invariant and $m_\varphi=\dim_\mathbb{C}\mathcal{V}_\varphi<\infty$, there exist $\varrho(S^1)$-invariant complex vector subspaces $\mathcal{V}_1,\mathcal{V}_2,\dots,\mathcal{V}_{m_\varphi}\subset\mathcal{V}_\varphi$ and $\mu_1,\mu_2,\dots,\mu_{m_\varphi}\in\mathbb{R}$ such that $\mathcal{V}_\varphi=\mathcal{V}_1\oplus\mathcal{V}_2\oplus\cdots\oplus\mathcal{V}_{m_\varphi}$, $\dim_\mathbb{C}\mathcal{V}_a=1$ and 
\[
   \mbox{$\varrho(\exp\lambda T)=e^{i\mu_a\lambda}\mathrm{id}$ on $\mathcal{V}_a$}
\]
for all $1\leq a\leq m_\varphi$ and $\lambda\in\mathbb{R}$. 
   Hence we can get the conclusion by taking a non-zero element of $\mathcal{V}_a$ for each $1\leq a\leq m_\varphi$.\par

   (2) One can conclude (2) by arguments similar to those above. 
   Indeed; there exist $\rho(S^1)$-invariant complex vector subspaces ${\sf V}_1,\dots,{\sf V}_k\subset{\sf V}$ and $\theta_1,\dots,\theta_k\in\mathbb{R}$ such that ${\sf V}={\sf V}_1\oplus\cdots\oplus{\sf V}_k$, $\dim_\mathbb{C}{\sf V}_b=1$ and $\rho(\exp\lambda T)=e^{i\theta_b\lambda}\mathrm{id}$ on ${\sf V}_b$ for all $1\leq b\leq k$ and $\lambda\in\mathbb{R}$, because of $S^1\subset Q^-$ and $k=\dim_\mathbb{C}{\sf V}<\infty$.
\end{proof}

\begin{proposition}\label{prop-3.7}
   Let $\{\varphi_a\}_{a=1}^{m_\varphi}$ and $\{{\sf v}_b\}_{b=1}^k$ be the complex bases of $\mathcal{V}_\varphi$ and ${\sf V}$ in Lemma {\rm \ref{lem-3.6}}, respectively. 
   For $y\in GQ^-$ we express $\varphi_a(y)\in{\sf V}$ as 
\[
   \varphi_a(y)=\varphi_a^1(y){\sf v}_1+\varphi_a^2(y){\sf v}_2+\cdots+\varphi_a^k(y){\sf v}_k.
\]  
   Then, for each $1\leq b\leq k$ there exists a unique polynomial $($holomorphic$)$ function $\varphi_a^b{}'$ on $\mathbb{C}^r\cong U^+$ of finite degree such that 
\[
   \mbox{$\varphi_a^b=\varphi_a^b{}'|_{U^+\cap GQ^-}$}.
\]
   Therefore, for a given $\phi\in\mathcal{V}_\varphi$ there exists a unique holomorphic mapping $\phi':U^+\to{\sf V}$ such that $\phi=\phi'|_{U^+\cap GQ^-}$. 
\end{proposition}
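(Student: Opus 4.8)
The plan is to transport everything onto the global holomorphic coordinates on $U^+\cong\mathbb{C}^r$ supplied by Remark \ref{rem-3.2}, namely $(z_1,\dots,z_r)\mapsto\exp\bigl(\sum_{j=1}^{r}z_jE_{\gamma_j}\bigr)$, and to play off against each other the three one-parameter eigenvalue data at hand: the scaling of these coordinates under conjugation by $\exp\lambda T$ coming from \eqref{eq-3.4}, the eigenvalues $\mu_a$ of $\varrho(\exp\lambda T)$ on $\mathcal{V}_\varphi$ from Lemma \ref{lem-3.6}-(1), and the eigenvalues $\theta_b$ of $\rho(\exp\lambda T)$ on ${\sf V}$ from Lemma \ref{lem-3.6}-(2).

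First I would record the basic identity: for $u=\exp\bigl(\sum_j z_jE_{\gamma_j}\bigr)\in U^+$ and $\lambda\in\mathbb{R}$, \eqref{eq-3.4} gives $(\exp\lambda T)\,u=u_\lambda\,(\exp\lambda T)$ with $u_\lambda:=\exp\bigl(\sum_j e^{in_j\lambda}z_jE_{\gamma_j}\bigr)\in U^+$; since $\exp\lambda T\in L\subset G\cap Q^-$, the point $u$ lies in $U^+\cap GQ^-$ if and only if $u_\lambda$ does, so the circle $\lambda\mapsto(e^{in_j\lambda}z_j)_j$ preserves $U^+\cap GQ^-$. Applying $\varphi_a$ to the identity and using property (2) of \eqref{eq-2.22} on the right, while on the left using \eqref{eq-2.25} together with $\varrho(\exp(-\lambda T))\varphi_a=e^{-i\mu_a\lambda}\varphi_a$, I get $e^{-i\mu_a\lambda}\varphi_a(u)=\rho(\exp\lambda T)^{-1}\bigl(\varphi_a(u_\lambda)\bigr)$. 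Expanding $\varphi_a(u)$ and $\varphi_a(u_\lambda)$ in the basis $\{{\sf v}_b\}$ and using $\rho(\exp\lambda T){\sf v}_b=e^{i\theta_b\lambda}{\sf v}_b$, then comparing ${\sf v}_b$-components, yields
\[
   \varphi_a^b(u_\lambda)=e^{i(\theta_b-\mu_a)\lambda}\,\varphi_a^b(u)\qquad\text{for all }\lambda\in\mathbb{R}.
\]
In coordinates this says that $f(z):=\varphi_a^b\bigl(\exp(\textstyle\sum_j z_jE_{\gamma_j})\bigr)$ obeys $f\bigl((e^{in_j\lambda}z_j)_j\bigr)=e^{i(\theta_b-\mu_a)\lambda}f(z)$ wherever it is defined.

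Since $e=\exp 0\in GQ^-$ and $GQ^-$ is open in $G_\mathbb{C}$, some polydisc $\Delta_\varepsilon=\{\,|z_j|<\varepsilon\ \forall j\,\}$ is carried into $U^+\cap GQ^-$; as $|e^{in_j\lambda}z_j|=|z_j|$, the functional equation above holds for all $\lambda\in\mathbb{R}$ and all $z\in\Delta_\varepsilon$. Writing $f=\sum_\alpha c_\alpha z^\alpha$ on $\Delta_\varepsilon$ and matching Taylor coefficients (for each $\lambda$, then letting $\lambda$ vary) forces $c_\alpha=0$ unless $\sum_j n_j\alpha_j=\theta_b-\mu_a$; because every $n_j$ is a positive integer — this is where \eqref{eq-3.3}, via Remark \ref{rem-3.2}, enters — only finitely many multi-indices $\alpha$ survive, each of total degree at most $\theta_b-\mu_a$. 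Hence $f|_{\Delta_\varepsilon}$ is the restriction of a polynomial $\varphi_a^b{}'$ on $\mathbb{C}^r\cong U^+$, of degree $\le\theta_b-\mu_a$, with $\varphi_a^b{}'\equiv 0$ when $\theta_b-\mu_a\notin\mathbb{Z}_{\ge 0}$. By the identity theorem the equality $\varphi_a^b=\varphi_a^b{}'|_{U^+\cap GQ^-}$ propagates from $\Delta_\varepsilon$ to all of $U^+\cap GQ^-$ (which is connected), and uniqueness of $\varphi_a^b{}'$ is clear since two polynomials agreeing on the nonempty open set $\Delta_\varepsilon$ coincide. For a general $\phi=\sum_a c_a\varphi_a\in\mathcal{V}_\varphi$ one then puts $\phi'(u):=\sum_{a}\sum_{b}c_a\,\varphi_a^b{}'(u)\,{\sf v}_b$, a polynomial map $U^+\to{\sf V}$, and reads off $\phi=\phi'|_{U^+\cap GQ^-}$ and its uniqueness from what has been shown.

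The step I expect to be most delicate is assembling the three eigenvalue relations into the single functional equation cleanly — in particular checking that the one-parameter conjugation keeps a point of $U^+\cap GQ^-$ inside $U^+\cap GQ^-$, which is exactly the combination $\exp\lambda T\in L\subset Q^-\cap G$ together with $|e^{in_j\lambda}|=1$; everything after that (the Taylor-coefficient comparison, finiteness from positivity of the $n_j$, the identity theorem) is routine. One should also not forget to invoke connectedness of $U^+\cap GQ^-$ in order to obtain the stated equality on all of $U^+\cap GQ^-$ rather than only near the identity.
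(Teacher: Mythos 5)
Your argument is correct and follows essentially the same route as the paper: you pass to the canonical coordinates $(z^1,\dots,z^r)$ on $U^+\cong\mathbb{C}^r$, derive the same functional equation $\varphi_a^b\bigl((\exp\lambda T)u\exp(-\lambda T)\bigr)=e^{i(\theta_b-\mu_a)\lambda}\varphi_a^b(u)$ from \eqref{eq-2.22}-(2), \eqref{eq-2.25} and Lemma \ref{lem-3.6}, and then compare Taylor coefficients on a small polydisc to conclude that only the finitely many multi-indices with $\sum_j n_jm_j=\theta_b-\mu_a$ survive, using that the $n_j$ are positive integers (Remark \ref{rem-3.2}). Your explicit attention to the $S^1$-stability of $U^+\cap GQ^-$ and to the connectedness needed for the identity-theorem step is a slightly more careful rendering of points the paper treats tacitly, but the proof is the same.
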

\begin{proof}
   Denote by $z^1,z^2,\dots,z^r$ the canonical coordinates of the first kind associated with the complex base $\{E_{\gamma_j}\}_{j=1}^r$ of $\frak{u}^+$ (see Remark \ref{rem-3.2} for $E_{\gamma_j}$). 
   Here, it turns out that $U^+\cong\mathbb{C}^r$ via 
\[
   U^+\ni\exp(z^1E_{\gamma_1}+z^2E_{\gamma_2}+\cdots+z^rE_{\gamma_r})\mapsto(z^1,z^2,\dots,z^r)\in\mathbb{C}^r.
\]
   Noting that $U^+\cap GQ^-$ is an open subset of $U^+$ containing the unit $e\in G_\mathbb{C}$ and that the restriction $\varphi_a^b|_{U^+\cap GQ^-}$ is a holomorphic function on $U^+\cap GQ^-$, we obtain an $R>0$ so that the following (i) and (ii) hold for $O:=\{u\in U^+ : \mbox{$|z^j(u)|<R$, $1\leq j\leq r$}\}$:
\begin{enumerate}
\item[(i)]  
   $O$ is an open subset of $U^+\cap GQ^-$ containing $e$, and 
\item[(ii)]
   on $O$ we can express $\varphi_a^b|_{U^+\cap GQ^-}$ as 
\[
   \varphi_a^b(z^1,z^2,\dots,z^r)=\sum_{m_1,m_2,\dots,m_r\geq0}\alpha^b_{m_1m_2\cdots m_r}(z^1)^{m_1}(z^2)^{m_2}\cdots(z^r)^{m_r}
\]
(the Taylor expansion of $\varphi_a^b|_{U^+\cap GQ^-}$ at $e=(0,0,\dots,0)$).
\end{enumerate}
   Remark that $O$ is stable under every inner automorphism of $S^1=\{\exp\lambda T \,|\, \lambda\in\mathbb{R}\}$, cf.\ \eqref{eq-3.4}. 
   For any $\lambda\in\mathbb{R}$ and $u\in O$ we have 
\[
\begin{split}
    \sum_{b=1}^ke^{i\theta_b\lambda}&\varphi_a^b(u){\sf v}_b
   =\rho(\exp\lambda T)(\sum_{b=1}^k\varphi_a^b(u){\sf v}_b)\quad\mbox{($\because$ Lemma \ref{lem-3.6}-(2))}\\
  &=\rho(\exp\lambda T)(\varphi_a(u))
   =\varphi_a(u\exp(-\lambda T)) \quad\mbox{($\because$ $\exp\lambda T\in Q^-$, \eqref{eq-2.22}-(2))}\\
  &=\bigl(\varrho(\exp\lambda T)\varphi_a\bigr)\bigl((\exp\lambda T)u\exp(-\lambda T)\bigr) \quad\mbox{($\because$ $\exp\lambda T\in G$, \eqref{eq-2.25})}\\
  &=(e^{i\mu_a\lambda}\varphi_a)\bigl((\exp\lambda T)u\exp(-\lambda T)\bigr)\quad\mbox{($\because$ Lemma \ref{lem-3.6}-(1))}\\
  &=\sum_{b=1}^ke^{i\mu_a\lambda}\varphi_a^b\bigl((\exp\lambda T)u\exp(-\lambda T)\bigr){\sf v}_b,
\end{split}
\]
and hence $e^{i\theta_b\lambda}\varphi_a^b(u)=e^{i\mu_a\lambda}\varphi_a^b\bigl((\exp\lambda T)u\exp(-\lambda T)\bigr)$. 
   This, together with (ii) and \eqref{eq-3.4}, yields  
\[
\begin{split}
  &\sum_{m_1,m_2,\dots,m_r\geq0}e^{i(\theta_b-\mu_a)\lambda}\alpha^b_{m_1m_2\cdots m_r}(z^1)^{m_1}(z^2)^{m_2}\cdots(z^r)^{m_r}\\
  &\qquad =e^{i(\theta_b-\mu_a)\lambda}\varphi_a^b(z^1,z^2,\dots,z^r)=e^{i(\theta_b-\mu_a)\lambda}\varphi_a^b(u)\\
  &\qquad =\varphi_a^b\bigl((\exp\lambda T)u\exp(-\lambda T)\bigr)
   =\varphi_a^b(e^{in_1\lambda}z^1,e^{in_2\lambda}z^2,\dots,e^{in_r\lambda}z^r)\\
  &\qquad =\sum_{m_1,m_2,\dots,m_r\geq0}e^{i(n_1m_1+n_2m_2+\dots+n_rm_r)\lambda}\alpha^b_{m_1m_2\cdots m_r}(z^1)^{m_1}(z^2)^{m_2}\cdots(z^r)^{m_r} 
\end{split}
\]
in case of $u=\exp(z^1E_{\gamma_1}+z^2E_{\gamma_2}+\cdots+z^rE_{\gamma_r})$.
   Therefore one shows that  
\[
   e^{i(\theta_b-\mu_a)\lambda}\alpha^b_{m_1m_2\cdots m_r}=e^{i(n_1m_1+n_2m_2+\dots+n_rm_r)\lambda}\alpha^b_{m_1m_2\cdots m_r}.
\]   
   Differentiating this equation at $\lambda=0$, we deduce that  
\begin{equation}\label{eq-3.8}
   (\theta_b-\mu_a)\alpha^b_{m_1m_2\cdots m_r}=(n_1m_1+n_2m_2+\dots+n_rm_r)\alpha^b_{m_1m_2\cdots m_r}
\end{equation}
for all $1\leq b\leq k$ and $m_1,m_2,\dots,m_r\geq0$.
   It follows from $n_1,n_2,\dots,n_r\in\mathbb{N}$ and \eqref{eq-3.8} that for each $b$, all the coefficients $\alpha^b_{m_1m_2\cdots m_r}$ vanish whenever $\theta_b-\mu_a\not\in\mathbb{N}\cup\{0\}$; and that with respect to $m_1,m_2,\dots,m_r$ with $\theta_b-\mu_a\neq n_1m_1+n_2m_2+\dots+n_rm_r$, the coefficient $\alpha^b_{m_1m_2\cdots m_r}$ vanishes even if $\theta_b-\mu_a\in\mathbb{N}\cup\{0\}$. 
   These imply that 
\[
   \varphi_a^b(z^1,z^2,\dots,z^r)=\sum_{m_1,m_2,\dots,m_r\geq0}\alpha^b_{m_1m_2\cdots m_r}(z^1)^{m_1}(z^2)^{m_2}\cdots(z^r)^{m_r}
\]
must be a polynomial function on $O$ of finite degree. 
   Consequently, for each $1\leq b\leq k$, $\varphi_a^b(z^1,\dots,z^r)$ can extend uniquely to a polynomial function $\varphi_a^b{}'(z^1,\dots,z^r)$ on $\mathbb{C}^r\cong U^+$ of finite degree.    
\end{proof}

\begin{remark}\label{rem-3.9}
   In Proposition \ref{prop-3.7} we have concluded that for any $\phi\in\mathcal{V}_\varphi$, the restriction $\phi|_{U^+\cap GQ^-}$ can be continued analytically to $U^+$, without the supposition (s2) in Theorem \ref{thm-3.1}.
\end{remark}

   Proposition \ref{prop-3.7} leads to 
\begin{corollary}\label{cor-3.10}
   Let $\varphi$ be any $K$-finite vector in $\mathcal{V}_{G/L}$ for the representation $\varrho$ defined by \eqref{eq-2.25}, and let $\mathcal{V}_\varphi$ be the complex vector subspace of $\mathcal{V}_{G/L}$ generated by $\{\varrho(k)\varphi : k\in K\}$.
   Suppose that {\rm (S)} there exists a fundamental root system $\Pi_\triangle$ of $\triangle$ satisfying  
\begin{enumerate}
\item[]{\rm (s1)} 
   $\alpha(-iT)\geq 0$ for all $\alpha\in\Pi_\triangle$, and
\item[]{\rm (s2)} 
   $\frak{g}_\beta\subset\frak{k}_\mathbb{C}$ for every $\beta\in\Pi_\triangle$ with $\beta(T)\neq0$.
\end{enumerate}
   Then, it follows that $\varphi\in\mathcal{V}_\varphi\subset F(\mathcal{V}_{G_\mathbb{C}/Q^-})$.
\end{corollary}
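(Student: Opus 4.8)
The plan is, for an arbitrary $\phi\in\mathcal{V}_\varphi$, to produce a holomorphic extension of $\phi$ first over the ``big'' Bruhat cell $U^+Q^-$ and over the codimension-one cells $w_\beta^{-1}U^+Q^-$ (for $\beta\in\Pi_\triangle$ with $\beta(T)\neq 0$), then to appeal to Corollary \ref{cor-2.20} to continue it analytically to all of $G_\mathbb{C}$, and finally to verify that the resulting map $\widetilde\phi:G_\mathbb{C}\to{\sf V}$ is $\rho$-equivariant (so $\widetilde\phi\in\mathcal{V}_{G_\mathbb{C}/Q^-}$) and satisfies $\widetilde\phi|_{GQ^-}=\phi$. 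Since $\varphi=\varrho(e)\varphi\in\mathcal{V}_\varphi$, this yields $\varphi\in\mathcal{V}_\varphi\subset F(\mathcal{V}_{G_\mathbb{C}/Q^-})$.

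First I would exploit (s2): by Lemma \ref{lem-2.14} it forces $w_\beta\in K\subset G$ whenever $\beta\in\Pi_\triangle$ and $\beta(T)\neq 0$, so $w_\beta\,GQ^-=GQ^-$, and because $\mathcal{V}_\varphi=\mathrm{span}_\mathbb{C}\{\varrho(k)\varphi:k\in K\}$ is $\varrho(K)$-stable, $\psi_\beta:=\varrho(w_\beta)\phi$ again lies in $\mathcal{V}_\varphi$ for every $\phi\in\mathcal{V}_\varphi$. (If (s2) fails these $w_\beta$ need not belong to $G$, which is exactly why Remark \ref{rem-3.9} handled only the big cell.) Applying Proposition \ref{prop-3.7} to $\phi$ and to each $\psi_\beta$ gives holomorphic maps $\phi',\psi_\beta':U^+\to{\sf V}$ with $\phi=\phi'|_{U^+\cap GQ^-}$ and $\psi_\beta=\psi_\beta'|_{U^+\cap GQ^-}$. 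Using that $(u,q)\mapsto uq$ is a biholomorphism of $U^+\times Q^-$ onto the open set $U^+Q^-$ (Proposition \ref{prop-2.7}(1),(3)), I then define $\widehat\phi$ on $\mathcal{O}:=U^+Q^-\cup\bigcup_{\beta(T)\neq0}w_\beta^{-1}U^+Q^-$ by $\widehat\phi(uq):=\rho(q)^{-1}\phi'(u)$ on $U^+Q^-$ and $\widehat\phi(w_\beta^{-1}uq):=\rho(q)^{-1}\psi_\beta'(u)$ on $w_\beta^{-1}U^+Q^-$; each branch is manifestly holomorphic on its cell.

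The decisive, and essentially the only nonroutine, step is to see that these branches agree on the overlaps of the cells. One first checks that every branch coincides with $\phi$ on the intersection of its cell with $GQ^-$: e.g.\ if $y=w_\beta^{-1}uq\in GQ^-$ then $u=w_\beta yq^{-1}\in GQ^-$, hence $u\in U^+\cap GQ^-$, so $\psi_\beta'(u)=\psi_\beta(u)=\phi(w_\beta^{-1}u)$ and $\widehat\phi(y)=\rho(q)^{-1}\phi(w_\beta^{-1}u)=\phi(w_\beta^{-1}uq)=\phi(y)$ by the defining relation $\phi(xq)=\rho(q)^{-1}\phi(x)$ of \eqref{eq-2.22}; the case $y\in U^+Q^-\cap GQ^-$ is similar. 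Consequently any two branches agree with each other on the intersection of their two cells with $GQ^-$, which is a nonempty open set by Corollary \ref{cor-2.9}(1). To promote this to agreement on the whole overlap I would use that the intersection of any two of these cells is connected --- being right $Q^-$-invariant with $Q^-$ connected (Proposition \ref{prop-2.7}(2)) and lying over the intersection of two $G$-translates of the affine big Schubert cell of $G_\mathbb{C}/Q^-$, i.e.\ over the complement of a proper closed analytic subset of an affine cell, which is connected --- so the identity theorem forces the branches to agree. Thus $\widehat\phi$ is a well-defined holomorphic ${\sf V}$-valued function on $\mathcal{O}$ with $\widehat\phi=\phi$ on $\mathcal{O}\cap GQ^-$.

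Finally, Corollary \ref{cor-2.20} (applied to the components of $\widehat\phi$ relative to a basis of ${\sf V}$) extends $\widehat\phi$ to a holomorphic $\widetilde\phi:G_\mathbb{C}\to{\sf V}$. For fixed $q\in Q^-$ the holomorphic function $x\mapsto\widetilde\phi(xq)-\rho(q)^{-1}\widetilde\phi(x)$ vanishes on the dense open set $U^+Q^-$ by construction of $\widehat\phi$, hence vanishes on all of $G_\mathbb{C}$; so $\widetilde\phi\in\mathcal{V}_{G_\mathbb{C}/Q^-}$. Likewise $\widetilde\phi-\phi$ vanishes on the nonempty open subset $\mathcal{O}\cap GQ^-$ of the connected set $GQ^-$, so $\widetilde\phi|_{GQ^-}=\phi$, i.e.\ $\phi=F(\widetilde\phi)\in F(\mathcal{V}_{G_\mathbb{C}/Q^-})$. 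As $\phi\in\mathcal{V}_\varphi$ was arbitrary and $\varphi\in\mathcal{V}_\varphi$, we conclude $\varphi\in\mathcal{V}_\varphi\subset F(\mathcal{V}_{G_\mathbb{C}/Q^-})$. I expect the overlap-compatibility of the third paragraph to be the crux: the pointwise identity over $GQ^-$ is immediate, but spreading it over the full Bruhat-cell intersections needs their connectedness, which rests on the structure of the big Schubert cell together with the connectedness of $Q^-$.
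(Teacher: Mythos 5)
Your proposal is correct and follows essentially the same route as the paper: use (s2) with Lemma \ref{lem-2.14} to get $w_\beta\in K$, hence $\varrho(w_\beta)\phi\in\mathcal{V}_\varphi$, apply Proposition \ref{prop-3.7} on each cell, glue over $\mathcal{O}$, and invoke Corollary \ref{cor-2.20} plus the identity theorem for equivariance. Your only addition is the explicit justification that the pairwise cell overlaps are connected (via connectedness of $Q^-$ and of the complement of an analytic set in the big cell), a point the paper passes over by simply citing the theorem of identity.
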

\begin{proof} 
   Take any $\phi\in\mathcal{V}_\varphi$. 
   By Proposition \ref{prop-3.7} there exists a unique holomorphic mapping $\phi':U^+\to{\sf V}$ such that $\phi=\phi'|_{U^+\cap GQ^-}$.
   Proposition \ref{prop-2.7}-(3) enables us to construct the holomorphic extension $\phi'':U^+Q^-\to{\sf V}$ of $\phi'$ from 
\[
   \mbox{$\phi''(uq):=\rho(q)^{-1}\bigl(\phi'(u)\bigr)$ for $(u,q)\in U^+\times Q^-$}.
\]  
   Here, it follows from $(U^+Q^-\cap GQ^-)=(U^+\cap GQ^-)Q^-$, $\phi=\phi'|_{U^+\cap GQ^-}$, $\phi\in\mathcal{V}_{G/L}$ and \eqref{eq-2.22}-(2) that 
\[
   \mbox{$\phi=\phi''$ on $U^+Q^-\cap GQ^-$}.
\]
   Now, Lemma \ref{lem-2.14} and (s2) assure that $w_\beta\in K$ for every $\beta\in\Pi_\triangle$ with $\beta(T)\neq0$. 
   This enables us to obtain 
\[
   \varrho(w_\beta)\phi\in\mathcal{V}_\varphi,
\]
since $\mathcal{V}_\varphi$ is $\varrho(K)$-invariant. 
   Accordingly for each $\beta\in\Pi_\triangle$ with $\beta(T)\neq0$, there exists a unique holomorphic mapping $(\varrho(w_\beta)\phi)'':U^+Q^-\to{\sf V}$ such that 
\[
   \mbox{$\varrho(w_\beta)\phi=(\varrho(w_\beta)\phi)''$ on $U^+Q^-\cap GQ^-$}.
\]
   Then, we define a holomorphic mapping $\hat{\phi}$ from 
\[
   \mbox{$\mathcal{O}=U^+Q^-\cup\bigl(\bigcup_{\mbox{\scriptsize{$\beta\in\Pi_\triangle$ with $\beta(T)\neq0$}}}w_\beta^{-1}U^+Q^-\bigr)$}
\]
into ${\sf V}$ as follows: 
\begin{equation}\label{eq-3.11}
   \hat{\phi}(x):=\begin{cases} 
   \phi''(x) & \mbox{if $x\in U^+Q^-$},\\
   (\varrho(w_\beta)\phi)''(w_\beta x) & \mbox{if $x\in w_\beta^{-1}U^+Q^-$}.
   \end{cases}
\end{equation}
   Here $\mathcal{O}$ is a dense, domain in $G_\mathbb{C}$ (cf.\ Corollary \ref{cor-2.20}). 
   Let us confirm that the definition \eqref{eq-3.11} is well-defined.
   Corollary \ref{cor-2.9}-(1) implies that the intersection 
\[
   \mbox{$GQ^-\cap U^+Q^-\cap\bigl(\bigcap_{\mbox{\scriptsize{$\beta\in\Pi_\triangle$ with $\beta(T)\neq0$}}}w_\beta^{-1}U^+Q^-\bigr)$}
\]
is a non-empty open subset of $G_\mathbb{C}$. 
   For any element $y$ of the intersection above and any $\beta\in\Pi_\triangle$ with $\beta(T)\neq0$ we have $w_\beta y\in U^+Q^-$ and $w_\beta y\in KGQ^-\subset GQ^-$; and thus   
\[
   (\varrho(w_\beta)\phi)''(w_\beta y)
   =(\varrho(w_\beta)\phi)(w_\beta y)
   \stackrel{\eqref{eq-2.25}}{=}\phi(y)
   =\phi''(y)
\]
in terms of $w_\beta y,y\in U^+Q^-\cap GQ^-$.
   For this reason \eqref{eq-3.11} is well-defined by the theorem of identity and it follows that $\phi=\hat{\phi}$ on $\mathcal{O}\cap GQ^-$. 
   From Corollary \ref{cor-2.20}, there exists the analytic continuation $\hat{\phi}':G_\mathbb{C}\to{\sf V}$ of $\hat{\phi}:\mathcal{O}\to{\sf V}$. 
   This $\hat{\phi}'$ satisfies $\hat{\phi}'(xq)=\rho(q)^{-1}(\hat{\phi}'(x))$ for all $(x,q)\in G_\mathbb{C}\times Q^-$, by the theorem of identity, $\phi=\hat{\phi}'|_{GQ^-}$, \eqref{eq-2.22}-(2) and $\phi\in\mathcal{V}_{G/L}$.  
   Consequently it is immediate from \eqref{eq-2.21} that $\hat{\phi}'\in\mathcal{V}_{G_\mathbb{C}/Q^-}$, so that $\phi=\hat{\phi}'|_{GQ^-}=F(\hat{\phi}')\in F(\mathcal{V}_{G_\mathbb{C}/Q^-})$. 
   This provides us with $\mathcal{V}_\varphi\subset F(\mathcal{V}_{G_\mathbb{C}/Q^-})$.
\end{proof}

   Now, let us demonstrate Theorem \ref{thm-3.1}.
\begin{proof}[Proof of Theorem {\rm \ref{thm-3.1}}]
   By Lemma \ref{lem-3.5} and Remark \ref{rem-2.23} it suffices to conclude  
\begin{equation}\label{eq-3.12}
   \mathcal{V}_{G/L}\subset F(\mathcal{V}_{G_\mathbb{C}/Q^-}).
\end{equation}
   Let $(\mathcal{V}_{G/L})_K$ be the set of $K$-finite vectors in $\mathcal{V}_{G/L}$ for the representation $\varrho$ defined by \eqref{eq-2.25}. 
   From Corollary \ref{cor-3.10} we obtain
\begin{equation}\label{eq-3.13}
   (\mathcal{V}_{G/L})_K\subset F(\mathcal{V}_{G_\mathbb{C}/Q^-}).
\end{equation}
   Now, let $\psi$ be an arbitrary element of $\mathcal{V}_{G/L}$. 
   On the one hand; Proposition \ref{prop-2.27} assures that there exists a sequence $\{\varphi_n\}_{n=1}^\infty\subset(\mathcal{V}_{G/L})_K$ satisfying 
\[
   \lim_{n\to\infty}d(\psi,\varphi_n)=0.
\] 
   On the other hand; since $\mathcal{V}_{G/L}=(\mathcal{V}_{G/L},d)$ is a Hausdorff topological vector space and $\dim_\mathbb{C}F(\mathcal{V}_{G_\mathbb{C}/Q^-})=\dim_\mathbb{C}\mathcal{V}_{G_\mathbb{C}/Q^-}<\infty$, it turns out that $F(\mathcal{V}_{G_\mathbb{C}/Q^-})$ is closed in $\mathcal{V}_{G/L}$. 
   Thus, it follows from \eqref{eq-3.13} that $\psi=\lim_{n\to\infty}\varphi_n\in F(\mathcal{V}_{G_\mathbb{C}/Q^-})$, so that \eqref{eq-3.12} holds. 
\end{proof}

\section{Examples}\label{sec-4}
   Let us give some examples which satisfy the supposition (S) in Theorem \ref{thm-3.1} and an example which does not so. 
   Recall that the supposition is as follows: 
\begin{quote}
   {\rm (S)} there exists a fundamental root system $\Pi_\triangle$ of $\triangle$ satisfying  
   \begin{enumerate}
   \item[]{\rm (s1)} $\alpha(-iT)\geq 0$ for all $\alpha\in\Pi_\triangle$, and
   \item[]{\rm (s2)} $\frak{g}_\beta\subset\frak{k}_\mathbb{C}$ for every $\beta\in\Pi_\triangle$ with $\beta(T)\neq0$.
   \end{enumerate}
\end{quote}

\begin{example}[{$G/L=SU(p,q)/S(U(h)\times U(p-h,q))$, $p+q\geq 2$, $0<h<p$}]\label{ex-4.1}
   Let $G_\mathbb{C}:=SL(p+q,\mathbb{C})$, $G:=SU(p,q)$, $\frak{g}_u:=\frak{su}(p+q)$ and  
\[
   \frak{h}_\mathbb{R}:=\left\{\begin{array}{@{}c@{\,\,}|@{\,\,}c@{}}
   \begin{pmatrix} x_1 & & \huge{O} \\  & \ddots & \\ \huge{O} & & x_{p+q}\end{pmatrix}
   & \displaystyle{x_l\in\mathbb{R},\, \sum_{l=1}^{p+q}x_l=0}\end{array}\right\},
\]
where $p+q\geq 2$.
   Denote by $\triangle=\triangle(\frak{g}_\mathbb{C},\frak{h}_\mathbb{C})$ the root system of $\frak{g}_\mathbb{C}$ relative $\frak{h}_\mathbb{C}$, define simple roots $\alpha_k\in\triangle$ ($1\leq k\leq p+q-1$) as
\[
   \alpha_k\Big(\begin{pmatrix} z_1 & & \huge{O} \\  & \ddots & \\ \huge{O} & & z_{p+q}\end{pmatrix}\Big):=z_k-z_{k+1},
\] 
and set $\Pi_\triangle:=\{\alpha_k\}_{k=1}^{p+q-1}$. 
   Here, the dual base $\{Z_k\}_{k=1}^{p+q-1}$ of $\Pi_\triangle=\{\alpha_k\}_{k=1}^{p+q-1}$ is 
\[
   \mbox{$Z_k=\dfrac{1}{p+q}\begin{pmatrix} (p+q-k)I_k & \huge{O}\\ \huge{O} & -kI_{p+q-k}\end{pmatrix}$ for $1\leq k\leq p+q-1$},
\]
where $I_n$ is the unit matrix of degree $n$. 
   Let $T_h:=iZ_h$, $0<h<p$, and   
\[
\begin{split}
&  \frak{k}:=\!\left\{\begin{array}{@{}c@{\,\,}|@{\,\,}c@{}}
   \begin{pmatrix} A_p & \huge{O} \\ \huge{O} & D_q\end{pmatrix}\!\in\frak{g}
   & \mbox{$A_p$ : $p\times p$ matrix, $D_q$ : $q\times q$ matrix}\end{array}\right\}\!,\\
&  \frak{p}:=\!\left\{\begin{array}{@{}c@{\,\,}|@{\,\,}c@{}}
   \begin{pmatrix} \huge{O} & B_{p\times q} \\ C_{q\times p} & \huge{O}\end{pmatrix}\!\in\frak{g}
   & \mbox{$B_{p\times q}$ : $p\times q$ matrix, $C_{q\times p}$ : $q\times p$ matrix}\end{array}\right\}\!.
\end{split}   
\] 
   In the setting above, it follows that $T_h$ is an elliptic element of $\frak{g}$, $i\frak{h}_\mathbb{R}$ is a maximal torus of $\frak{g}_u$ containing $T_h$, $\frak{g}=\frak{k}\oplus\frak{p}$, $\frak{g}_u=\frak{k}\oplus i\frak{p}$ and (s1) $\alpha(-iT_h)\geq 0$ for all $\alpha\in\Pi_\triangle$.  
   Moreover, 
\begin{enumerate}
\item 
   for $\beta\in\Pi_\triangle=\{\alpha_k\}_{k=1}^{p+q-1}$, $\beta(T_h)\neq0$ if and only if $\beta=\alpha_h$, 
\item 
   $\frak{g}_{\alpha_h}=\mathrm{span}_\mathbb{C}\{E_{h,h+1}\}$,    
\end{enumerate}
where $\frak{g}_{\alpha_h}$ is the root subspace of $\frak{g}_\mathbb{C}$ for $\alpha_h$ and $E_{h,h+1}$ is the matrix whose $(h,h+1)$-element is $1$ and whose other elements are all $0$. 
   Since $0<h<p$, we have (s2) 
\[
   \frak{g}_{\alpha_h}\subset\frak{k}_\mathbb{C}.
\]  
   For this reason, the supposition (S) in Theorem \ref{thm-3.1} holds for this example. 
   Incidentally, $L=C_G(T_h)=S(U(h)\times U(p-h,q))$, and Theorem \ref{thm-3.1} implies that the complex Lie algebra $\mathcal{O}(T^{1,0}(G/L))$ of holomorphic vector fields on $G/L=SU(p,q)/S(U(h)\times U(p-h,q))$ is isomorphic to $\frak{sl}(p+q,\mathbb{C})$, where $p+q\geq 2$, $0<h<p$. 
\end{example}

   Unfortunately, there are examples of elliptic orbits to which we cannot apply Theorem \ref{thm-3.1}.
\begin{example}\label{ex-4.2}
   The supposition (S) in Theorem \ref{thm-3.1} cannot hold for any symmetric bounded domain $D$ in $\mathbb{C}^n$ at all.\par

   Let us explain the reason why. 
   In order to do so, we consider an elliptic orbit $G/L=G/C_G(T)$ in the setting of Subsection \ref{subsec-3.1}, and put $\frak{u}:=[T,\frak{g}]$. 
   Since $\mathrm{ad}T\in\mathrm{End}(\frak{g})$ is semisimple and $\frak{l}=\frak{c}_\frak{g}(T)$ one can decompose $\frak{g}$ as $\frak{g}=\frak{l}\oplus\frak{u}$, and furthermore decompose it as follows: 
\[
   \frak{g}=(\frak{k}\cap\frak{l})\oplus(\frak{p}\cap\frak{l})\oplus(\frak{k}\cap\frak{u})\oplus(\frak{p}\cap\frak{u})
\]  
because of $T\in\frak{k}$. 
   Then, Lemma \ref{lem-2.14} tells us that  
\[
   \frak{k}\cap\frak{u}\neq\{0\}
\]
is a necessary condition for the (s2) to hold. 
   However, if $G/L$ is a symmetric bounded domain in $\mathbb{C}^n$ (where $G$ is the identity component of $\mathrm{Hol}(G/L)$), then $\frak{k}\cap\frak{l}=\frak{k}$, $\frak{p}\cap\frak{l}=\{0\}$, $\frak{k}\cap\frak{u}=\{0\}$ and $\frak{p}\cap\frak{u}=\frak{p}$. 
   For this reason, the supposition (S) cannot hold for the $D$ at all.  
\end{example}

   The following example is interesting, we think: 
\begin{example}[$G/L=G_{2(2)}/(SL(2,\mathbb{R})\cdot T^1)$]\label{ex-4.3}
   Let $\frak{g}_\mathbb{C}$ be the exceptional complex simple Lie algebra $(\frak{g}_2)_\mathbb{C}$ of the type $G_2$. 
   Assume that the Dynkin diagram of $\triangle=\triangle(\frak{g}_\mathbb{C},\frak{h}_\mathbb{C})$ is as follows (cf.\ Bourbaki \cite[p.289]{Br}\footnote{There is a minor misprint in \cite{Br}: p.289, $\downarrow$ 9, Add $\alpha_2$ to (II) Positive roots.}):
\begin{center}
\unitlength=1mm
\begin{picture}(19,11) 
 \put(9,1){$3$} 
 \put(7,8){$\alpha_1$} 
 \put(9,6){\circle{2}}  
 \put(10,6){\line(1,1){4}}
 \put(10,6){\line(1,-1){4}}   
 \put(10,6.5){\line(1,0){5}} 
 \put(10,6){\line(1,0){5}}
 \put(10,5.5){\line(1,0){5}}  
 \put(16,1){$2$} 
 \put(16,8){$\alpha_2$} 
 \put(16,6){\circle{2}}
\put(1,5.5){$\frak{g}_\mathbb{C}$:} 
\end{picture}
\end{center}
   First of all, let us set a non-compact real form $\frak{g}$ of $\frak{g}_\mathbb{C}$.
   Define a compact real form $\frak{g}_u$ of $\frak{g}_\mathbb{C}$ by $\frak{h}_\mathbb{R}:=\mathrm{span}_\mathbb{R}\{H_\alpha\,|\,\alpha\in\triangle\}$, $\frak{g}_u:=i\frak{h}_\mathbb{R}\oplus\bigoplus_{\alpha\in\triangle}\mathrm{span}_\mathbb{R}\{E_\alpha-E_{-\alpha}\}\oplus\mathrm{span}_\mathbb{R}\{i(E_\alpha+E_{-\alpha})\}$, and denote by $\{Z_1,Z_2\}\subset\frak{h}_\mathbb{R}$ the dual base of $\Pi_\triangle=\{\alpha_1,\alpha_2\}$ (cf.\ Paragraph \ref{subsec-2.3.1} for $H_\alpha$, $E_\alpha$). 
   By use of this $Z_2$ we set 
\begin{equation}\label{eq-4.4}
   \theta:=\exp\pi\mathrm{ad}(iZ_2).
\end{equation} 
   Then $\theta$ is an involutive automorphism of the complex Lie algebra $\frak{g}_\mathbb{C}$ such that $\theta(\frak{g}_u)\subset\frak{g}_u$, and we define a non-compact real form $\frak{g}\subset\frak{g}_\mathbb{C}$ in the following way:
\[
\begin{array}{lll} 
   \frak{k}:=\{X\in\frak{g}_u \,|\, \theta(X)=X\},
   & i\frak{p}:=\{Y\in\frak{g}_u \,|\, \theta(Y)=-Y\}, 
   & \frak{g}:=\frak{k}\oplus\frak{p}.
\end{array}
\]
   Remark here that $\frak{g}_u=\frak{k}\oplus i\frak{p}$, $\frak{k}=\frak{sp}(1)\oplus\frak{sp}(1)$ and $\frak{g}=\frak{g}_{2(2)}$; besides, 
\[
   \frak{k}_\mathbb{C}=\{Z\in\frak{g}_\mathbb{C} \,|\, \theta(Z)=Z\},
\]
where $\frak{k}_\mathbb{C}$ is the complex subalgebra of $\frak{g}_\mathbb{C}$ generated by $\frak{k}$.
\begin{center}
\unitlength=1mm
\begin{picture}(33,7) 
 \put(8,4){$\alpha_1$} 
 \put(9,2){\circle{2}}
 \put(15,4){$-3\alpha_1-2\alpha_2$} 
 \put(22,2){\circle{2}}  
\put(1,2){$\frak{k}_\mathbb{C}$:} 
\end{picture}
\end{center}
   In this setting, a given $T\in i\frak{h}_\mathbb{R}$ is an elliptic element of $\frak{g}$ and we know that for $\frak{l}:=\frak{c}_\frak{g}(T)$,
\begin{enumerate}
\item[(a)]
   $\frak{l}=\frak{sl}(2,\mathbb{R})\oplus\frak{t}^1$ in case of $T=i(Z_1-2Z_2)$, 
\item[(b)]
   $\frak{l}=\frak{sl}(2,\mathbb{R})\oplus\frak{t}^1$ in case of $T=i(Z_1-3Z_2)$.
\end{enumerate} 
   cf.\ Proposition 5.5 \cite[p.1157]{Bo}. 
   We investigate the cases (a) and (b), individually.\par

   Case (a): Let $T:=i(Z_1-2Z_2)$ and $\Pi_a:=\{2\alpha_1+\alpha_2,-3\alpha_1-2\alpha_2\}$. 
   Then $\Pi_a$ is a fundamental root system of $\triangle$ such that (s1) $\alpha(-iT)\geq 0$ for all $\alpha\in\Pi_a$. 
   Indeed, it follows from $\alpha_k(Z_j)=\delta_{kj}$ that $(2\alpha_1+\alpha_2)(-iT)=0$ and $(-3\alpha_1-2\alpha_2)(-iT)=1$.  
\begin{center}
\unitlength=1mm
\begin{picture}(29,11) 
 \put(7,8){$2\alpha_1+\alpha_2$} 
 \put(15,6){\circle{2}}  
 \put(16,6){\line(1,1){4}}
 \put(16,6){\line(1,-1){4}}   
 \put(16,6.5){\line(1,0){5}} 
 \put(16,6){\line(1,0){5}}
 \put(16,5.5){\line(1,0){5}}   
 \put(19,1){$-3\alpha_1-2\alpha_2$} 
 \put(22,6){\circle{2}}
\put(1,5){$\Pi_a$:} 
\end{picture}
\end{center}
   Since \eqref{eq-4.4} yields $\theta(E_{-3\alpha_1-2\alpha_2})=E_{-3\alpha_1-2\alpha_2}$, we have (s2) $\frak{g}_{-3\alpha_1-2\alpha_2}\subset\frak{k}_\mathbb{C}$. 
   Therefore the supposition (S) in Theorem \ref{thm-3.1} holds in this case.\par

   Case (b): Let $T:=i(Z_1-3Z_2)$ and $\Pi_b:=\{\alpha_1,-3\alpha_1-\alpha_2\}$. 
   Then, $\Pi_b$ is a fundamental root system of $\triangle$ such that (s1) $\alpha_1(-iT)=1$ and $(-3\alpha_1-\alpha_2)(-iT)=0$.        
\begin{center}
\unitlength=1mm
\begin{picture}(17,11) 
 \put(7,8){$\alpha_1$} 
 \put(10,6){\circle{2}}  
 \put(11,6){\line(1,1){4}}
 \put(11,6){\line(1,-1){4}}   
 \put(11,6.5){\line(1,0){5}} 
 \put(11,6){\line(1,0){5}}
 \put(11,5.5){\line(1,0){5}}   
 \put(14,1){$-3\alpha_1-\alpha_2$} 
 \put(17,6){\circle{2}}  
\put(1,5){$\Pi_b$:} 
\end{picture}
\end{center} 
   From \eqref{eq-4.4} one obtains (s2) $\theta(E_{\alpha_1})=E_{\alpha_1}$. 
   Hence the supposition (S) in Theorem \ref{thm-3.1} holds in this case, also. 
\end{example}

   We end this paper with a comment on Example \ref{ex-4.3}, $G/L=G_{2(2)}/(SL(2,\mathbb{R})\cdot T^1)$. 
   In both the cases (a) and (b), the supposition (S) in Theorem \ref{thm-3.1} holds. 
   So, in each case Theorem \ref{thm-3.1} implies that the complex Lie algebra $\mathcal{O}(T^{1,0}(G/L))$ of holomorphic vector fields on $G/L$ is isomorphic to $\mathcal{O}(T^{1,0}(G_\mathbb{C}/Q^-))$. 
   Then, 
\begin{enumerate}
\item[a.]
   $\mathcal{O}(T^{1,0}(G/L))$ is isomorphic to $(\frak{g}_2)_\mathbb{C}$ in case (a); but, in contrast, 
\item[b.]
   $\mathcal{O}(T^{1,0}(G/L))$ is isomorphic to $\frak{so}(7,\mathbb{C})$ in case (b).
\end{enumerate} 
   cf.\ the proof of Theorem 7.1 in Oni\v{s}\v{c}ik \cite[p.238--239]{On}. 

\section*{Acknowledgements}
   The author would like to express his sincere gratitude to Professor Soji Kaneyuki for the valuable suggestions in Kyoto, 24 October 2005.


\end{document}